\documentclass[1 [leqno,11pt]{amsart}
\usepackage{amssymb, amsmath, color}

 \setlength{\oddsidemargin}{2mm}
\setlength{\evensidemargin}{2mm} \setlength{\topmargin}{-15mm}
\setlength{\textheight}{220mm} \setlength{\textwidth}{160mm}

\let\lam=\lambda
\let\Lam=\Lambda
\let\f=\frac
\let\p=\partial
\let\D=\Delta
\let\wt=\widetilde
\let\wh=\widehat
\def\al{\alpha}
\def\e{\epsilon}

\def\du{\delta_u}

\def\h{{\rm h}}
\def\v{{\rm v}}
\def\dive{{\mathop{\rm div}\nolimits}\,}
\def\diveh{{\mathop{\rm div}_{\rm h}\nolimits}\,}

\def\nablah{\nabla_{\rm h}}
\def\uh{u^{\rm h}}
\def\dH{\dot{H}}

\def\pa{\partial}
\def\Supp{\mathop{\rm Supp}\nolimits\ }

\def\cA{{\mathcal A}}
\def\cB{{\mathcal B}}
\def\cC{{\mathcal C}}
\def\cD{{\mathcal D}}
\def\cE{{\mathcal E}}
\def\cF{{\mathcal F}}

\def\cH{{\mathcal H}}
\def\cM{{\mathcal M}}

\def\N{\mathop{\mathbb  N\kern 0pt}\nolimits}
\def\Q{\mathop{\mathbb  Q\kern 0pt}\nolimits}
\def\R{{\mathop{\mathbb R\kern 0pt}\nolimits}}
\def\Z{\mathop{\mathbb  Z\kern 0pt}\nolimits}
\def\PP{\mathop{\mathbb P\kern 0pt}\nolimits}

\def\eqdef{\buildrel\hbox{\footnotesize def}\over =}

\newcommand{\dv}{\Delta^{\rm v}}
\newcommand{\Dvl}{\Delta_{\ell}^{\rm v}}
\newcommand{\dvl}{\Delta_{\ell}^{\rm v}}
\newcommand{\Dvk}{\Delta_{k}^{\rm v}}

\newcommand{\Svl}{S_{\ell-1}^{\rm v}}
\newcommand{\Svk}{S_{k-1}^{\rm v}}
\newcommand{\Skk}{S_{k+2}^{\rm v}}

\newcommand{\BB}{B^{0,\f12}}

\newcommand{\LB}{\wt{L}_t^{\infty}(B^{0,\f12})}
\newcommand{\AB}{\wt{L}_t^2(B^{0,\f12})}
\newcommand{\LTB}{\wt{L}_T^{\infty}(B^{0,\f12})}
\newcommand{\ATB}{\wt{L}_T^2(B^{0,\f12})}

\newcommand{\andf}{~\text{ and }~}
\newcommand{\with}{~\text{ with }~}

\newtheorem{defi}{Definition}[section]
\newtheorem{thm}{Theorem}[section]
\newtheorem{lem}{Lemma}[section]
\newtheorem{rmk}{Remark}[section]
\newtheorem{col}{Corollary}[section]

\begin{document}

\title[The lifespan of $3$-D anisotropic Navier-Stokes system]
{The influence of viscous coefficients on the lifespan of
$3$-D anisotropic Navier-Stokes system}

\author[T. Hao]{Tiantian Hao}
\address[T. Hao]{
Academy of Mathematics and Systems Science, Chinese Academy of Sciences, Beijing 100190, China, and School of Mathematical Sciences, University of Chinese Academy of Sciences, Beijing 100049, China.}
\email{htt@amss.ac.cn}

\author[Y. Liu]{Yanlin Liu}
\address[Y. Liu]{School of Mathematical Sciences,
Laboratory of Mathematics and Complex Systems,
MOE, Beijing Normal University, 100875 Beijing, China.}
\email{liuyanlin@bnu.edu.cn}

\date{\today}

\begin{abstract}
The anisotropic Navier-Stokes system arises in geophysical fluid dynamics,
which is derived by changing $-\nu\D$ in the classical Navier-Stokes system
to $-(\nu_1\partial^2_1+\nu_2\partial^2_2+\nu_3\partial^2_3)$.
Here $\nu_1,\,\nu_2\,,\nu_3$ are the viscous coefficients,
which can be different from each other.
This reflects that the fluid can behave differently in each direction.

The purpose of this paper is to derive some lower bound estimates
on the lifespan to such anisotropic Navier-Stokes system.
We not only investigate the case when $\nu_1,\,\nu_2\,,\nu_3$
are all positive, but also the more sophisticated cases when
one or two of them vanish. We find that in these lower bound estimates,
the weights of $\nu_1,\,\nu_2\,,\nu_3$ are not equal.
A detailed study of this problem can also help us to have
a better understanding of the nonlinear structure in
the classical Navier-Stokes system.
\end{abstract}

\maketitle

\noindent {\sl Keywords:} Navier-Stokes system,
anisotropic, lifespan, viscous coefficients,
partial dissipation

\vskip 0.2cm
\noindent {\sl AMS Subject Classification (2000):} 35Q30, 76D03

\setcounter{equation}{0}
\section{Introduction}\label{Sect1}

In this paper, we consider the lifespan of solutions to the following $3$-D incompressible anistropic Navier-Stokes system (AN-S):
 \begin{equation}\label{1.1}
     \left\{
     \begin{array}{l}
     \pa_t u-(\nu_1\partial^2_1+\nu_2\partial^2_2+\nu_3\partial^2_3)u
     +u\cdot\nabla u=-\nabla P,\qquad (t,x)\in \mathbb{R}^{+}\times\mathbb{R}^{3},\\
     \dive u = 0,\\
     u|_{t=0} =u_0,
     \end{array}
     \right.
\end{equation}
where $u$ stands for velocity field and $P$
the scalar pressure of the fluid, which guarantees the
divergence-free condition of $u$,
and $\nu_1,\,\nu_2\,,\nu_3$ are the viscous coefficients.

When $\nu_1=\nu_2=\nu_3=\nu$, \eqref{1.1} reduces to the classical
Navier-Stokes system (N-S):
\begin{equation}\label{1.2}
\pa_t u-\nu\D u+u\cdot\nabla u=-\nabla P,\quad\dive u=0.
\end{equation}
This isentropic property comes from the assumption made by Stokes
that the Cauchy stress tensor can be decomposed as
$\sigma=-P{\rm Id}+2\nu\cM$, where $P$ is the static pressure, and
$\cM=\f12(\pa_iu^j+\pa_ju^i)_{3\times3}$
is the Cauchy strain tensor. In this way, we get \eqref{1.2}.

However, in the Geophysical Fluid Dynamics (see for instance
\cite{CDGG, Pedlovsky}), the Cauchy stress tensor
in general has the following form: $\sigma=-P \rm{Id}+\tau$,
where the Turbulent stress tensor $\tau=(\nu_i\pa_iu^j
+\nu_j\pa_ju^i)_{3\times3}$. Then by using $\dive u=0$,
it is not difficult to verify that
$$\dive\tau=(\nu_1\partial^2_1+\nu_2\partial^2_2+\nu_3\partial^2_3)u,$$
which leads to the system \eqref{1.1}. Here the fact that $\nu_1,\,\nu_2\,,\nu_3$
may be different, reflects that the fluid may behave differently in each direction.
That is why we call it anisotropic.
\smallskip

In the seminal paper \cite{Leray}, among many other fundamental results,
Leray proved that the strong solutions to \eqref{1.2}
with initial data $u_0\in L^p~(3<p\leq\infty)$ must exist at least to
\begin{equation}\label{lifespanLeray}
T^{\star}\geq C\nu^{\f{p+3}{p-3}}
\|u_0\|_{L^p}^{-\f{2p}{p-3}},\quad\forall\ p\in(3,\infty].
\end{equation}
However, it is still one big open problem that for general initial data,
whether or not the corresponding strong solutions always exist globally in time.

It is worth mentioning that, the powers of $\nu$ and $\|u_0\|_{L^p}$ in
\eqref{lifespanLeray} are determined by the scaling property of N-S,
which says that if $u$ is a solution of \eqref{1.2} with initial data $u_0$ on $[0,T]$,
then $u_\lam$ is also a solution of \eqref{1.2}
with initial data $u_{0,\lam}$ on $[0,T/\lam^2]$, where
\begin{equation}\label{scaling}
u_\lam(t,x)\eqdef \lambda u(\lambda^2 t,\lambda x),\andf
u_{0,\lam}(x)\eqdef\lambda u_0(\lambda x),\quad\forall\ \lam>0.
\end{equation}
As a result, it seems hopeless so far to refine these powers in \eqref{lifespanLeray}.

The purpose of this paper is to generalize the lower bound estimate
\eqref{lifespanLeray} to AN-S, including all the cases that
either $\nu_1,\,\nu_2\,,\nu_3$ are all positive, or some of them vanish.
Notice that to estimate the lifespan, we need to
use the dissipation effect caused by the viscosity
to offset the potential accumulation caused by the convection.
Thus a detailed study of this problem can also help us to have
a better understanding of the nonlinear structure in N-S.

The first result of this paper states as follows:

\begin{thm}\label{thm1}
 {\sl Consider \eqref{1.1} with
 $0<\nu_3\leq \nu_2\leq \nu_1$ and $u_0\in L^p$.
 Let $T^{\star}$ be the lifespan of the strong solution to \eqref{1.1}.
 Then $T^{\star}$ satisfies the following lower bound estimate:
\begin{equation}\label{life div}
T^{\star}\geq C\nu^{\frac{p}{p-3}}_2(\nu_1\nu_2\nu_3)^{\frac{1}{p-3}}
\|u_0\|_{L^p}^{-\frac{2p}{p-3}},\quad\text{ when }p\in(3,\infty),
\end{equation}
and
\begin{equation}\label{infty}
T^{\star}\geq C \nu_3\|u_0\|^{-2}_{L^{\infty}},\quad\text{ when }p=\infty.
\end{equation}
}\end{thm}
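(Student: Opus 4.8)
The plan is to realize the strong solution through its mild (Duhamel) formulation and to run a contraction argument in a Kato--type space whose time weight is dictated by the anisotropic heat semigroup. Writing $\mathcal{L}=\nu_1\pa_1^2+\nu_2\pa_2^2+\nu_3\pa_3^2$ and letting $\PP=\mathrm{Id}-\na\D^{-1}\dive$ be the (viscosity--independent) Leray projector, the system \eqref{1.1} is equivalent to
\[
u(t)=e^{t\mathcal{L}}u_0-\int_0^t e^{(t-s)\mathcal{L}}\,\PP\,\dive(u\otimes u)(s)\,ds.
\]
First I would record the smoothing properties of $e^{t\mathcal{L}}$. Since its kernel is the product Gaussian with width $\sqrt{\nu_i t}$ in the $i$--th direction, Young's inequality gives, for $1\le r\le q\le\infty$, the bound $\|e^{t\mathcal{L}}f\|_{L^q}\lesssim(\nu_1\nu_2\nu_3)^{-\frac12(\frac1r-\frac1q)}t^{-\frac32(\frac1r-\frac1q)}\|f\|_{L^r}$, together with the directional gain $\|\pa_j e^{t\mathcal{L}}f\|_{L^q}\lesssim(\nu_j t)^{-\frac12}(\nu_1\nu_2\nu_3)^{-\frac12(\frac1r-\frac1q)}t^{-\frac32(\frac1r-\frac1q)}\|f\|_{L^r}$. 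These make the working space transparent: I would take $\|u\|_{X_T}=\sup_{0<t<T}(\nu_1\nu_2\nu_3)^{\frac1{2p}}t^{\frac3{2p}}\|u(t)\|_{L^\infty}$, so that the linear part obeys $\|e^{t\mathcal{L}}u_0\|_{X_T}\lesssim\|u_0\|_{L^p}$ for every $T$.

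For the bilinear term $B(u,v)(t)=-\int_0^t e^{(t-s)\mathcal{L}}\PP\,\dive(u\otimes v)(s)\,ds$, the naive route bounds each of the three pieces $\pa_j(u^jv)$ by the directional estimate and integrates the resulting kernel $(\nu_j(t-s))^{-\frac12}s^{-\frac3p}$ in $s$; this converges precisely because $p>3$, giving $\|B(u,v)\|_{X_T}\lesssim\nu_3^{-\frac12}(\nu_1\nu_2\nu_3)^{-\frac1{2p}}T^{\frac{p-3}{2p}}\|u\|_{X_T}\|v\|_{X_T}$, the factor $\nu_3^{-1/2}$ coming from the worst (smallest--viscosity) direction $j=3$. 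Carried through the fixed point this already yields $T^\star\gtrsim\nu_3^{\frac{p}{p-3}}(\nu_1\nu_2\nu_3)^{\frac1{p-3}}\|u_0\|_{L^p}^{-\frac{2p}{p-3}}$. \textbf{The crux of the theorem is to upgrade the offending $\nu_3$ to $\nu_2$.} Here I expect one must exploit $\dive u=0$: the dangerous vertical derivative always enters through $\pa_3(u^3 u^k)$, and since $\pa_3u^3=-\diveh\uh$ the vertical velocity is not independent but satisfies $u^3=-\int^{x_3}\diveh\uh\,dz$. The idea is to reorganise the nonlinearity so that the outer derivative absorbed by the semigroup is always horizontal (hence charged the cost $\nu_1^{-1/2}$ or $\nu_2^{-1/2}$), the residual genuinely vertical interaction being controlled by estimating $u^3$ through the displayed representation in an anisotropic, vertical--$L^1$/horizontal norm. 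The whole difficulty is to carry this out consistently inside $X_T$ — in particular to control the commutators produced by $\PP$ against these mixed norms — so that the bilinear bound improves to $\|B(u,v)\|_{X_T}\lesssim\nu_2^{-\frac12}(\nu_1\nu_2\nu_3)^{-\frac1{2p}}T^{\frac{p-3}{2p}}\|u\|_{X_T}\|v\|_{X_T}$; this is the step I expect to be the main obstacle.

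Once the refined bilinear bound is in hand, the standard contraction lemma produces a solution on $[0,T]$ as soon as $\nu_2^{-\frac12}(\nu_1\nu_2\nu_3)^{-\frac1{2p}}T^{\frac{p-3}{2p}}\|u_0\|_{L^p}\le c_0$ for a small absolute constant $c_0$. Solving this inequality for $T$ reproduces exactly \eqref{life div}, and one checks the exponents against the scaling \eqref{scaling}, under which both sides transform by $\lambda^{-2}$. Finally, for $p=\infty$ the weight degenerates and I would argue directly: from the Duhamel formula and the $L^\infty\to L^\infty$ directional bound $\|\pa_j e^{t\mathcal{L}}\PP\,g\|_{L^\infty}\lesssim(\nu_j t)^{-1/2}\|g\|_{L^\infty}$ (the action of $\PP$ being justified through the anisotropic Oseen kernel), the worst direction alone gives $\|u(t)\|_{L^\infty}\le\|u_0\|_{L^\infty}+C(t/\nu_3)^{1/2}\sup_{[0,t]}\|u\|_{L^\infty}^2$, and a continuity argument closes as long as $(t/\nu_3)^{1/2}\|u_0\|_{L^\infty}$ stays below a fixed threshold, i.e. for $t\le C\nu_3\|u_0\|_{L^\infty}^{-2}$, which is \eqref{infty}.
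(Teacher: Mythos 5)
Your $p=\infty$ argument is exactly the paper's: the Duhamel formulation plus the fixed point lemma with the worst-direction bound $\nu_3^{-1/2}\sqrt{t}$, so that part is fine. The finite-$p$ case, however, has a genuine gap, and you have located it yourself: the improved bilinear estimate with $\nu_2^{-1/2}$ in place of $\nu_3^{-1/2}$ \emph{is} the content of the theorem, and your proposal only conjectures it. Moreover, the mechanism you sketch is unlikely to close in the Kato space $X_T$. After the Leibniz expansion $\pa_3(u^3v^k)=(\pa_3u^3)v^k+u^3\pa_3v^k$, the identity $\pa_3u^3=-\diveh\uh$ helps only with the first term; the second carries a genuinely vertical derivative that is no longer in divergence form, so it cannot be handed to the semigroup, and in a pointwise mild formulation there is no partner to integrate it against. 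The representation $u^3=-\int^{x_3}\diveh\uh\,dz$ involves an operator unbounded on $L^p$, and you specify no norm in which that loss is recovered. The cancellation that actually produces $\nu_2$ is quadratic rather than bilinear: it comes from testing the equation against $|u|^{p-2}u$, where $\bigl(u\cdot\na u,|u|^{p-2}u\bigr)=0$ by skew-symmetry, so the only surviving nonlinear contribution is the pressure.

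The paper's route is an $L^p$ energy estimate. One splits $P=P_1+P_2$ with $P_2=(-\D)^{-1}\pa_3^2(u^3u^3)$; for $P_1$ and the horizontal components of $\na P$ one integrates by parts horizontally, and for $\pa_3P_2$ one first uses $\pa_3u^3=-\diveh\uh$ and then again integrates by parts horizontally. In every pressure term the undistributed derivative landing on $|u|^{p-2}u$ is therefore horizontal, producing the factor $\bigl(\int_{\R^3}|\nablah u|^2|u|^{p-2}\,dx\bigr)^{1/2}$, which is paid for by the horizontal dissipation at cost $\nu_2^{-1/2}$, while the anisotropic inequality \eqref{L43} spreads the remaining factors evenly over the three directions and yields $(\nu_1\nu_2\nu_3)^{-1/(2p)}$; a continuity argument then gives \eqref{life div}. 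To salvage your approach you would have to import this symmetric cancellation into the fixed-point setting, which effectively means abandoning the contraction for finite $p$.
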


\begin{rmk}\begin{enumerate}
\item[(1)] It is not difficult to verify that the lower bound estimates
\eqref{life div} and \eqref{infty} given in Theorem \ref{thm1} also
keep invariant under the scaling transformation \eqref{scaling}. As a result,
the power of $\|u_0\|_{L^p}$ and the total power of all the $\nu_i$'s
are the same as the corresponding powers in Leray's result \eqref{lifespanLeray}.

\item[(2)] The $\nu_1,\,\nu_2\,,\nu_3$ are in different statuses in
\eqref{life div} and \eqref{infty}, and the viscous coefficient
with highest power is the smallest one $\nu_3$ in \eqref{infty},
but $\nu_2$ in \eqref{life div} for finite $p$.

Another difference is that, the power of the largest viscous coefficient $\nu_1$
in \eqref{life div} is positive, which makes
$$\lim\limits_{\nu_1\rightarrow\infty}T^\star=\infty,~\text{ when }
~u_0\in L^p~\text{ for }~p\in(3,\infty).$$
In view of this and the fact that the strong solutions
to N-S will eventually become small (see \cite{GIP}),
naturally the following corollary can be expected:
\end{enumerate}\end{rmk}

 \begin{col}\label{col1.1}
{\sl Consider \eqref{1.1} with $0<\nu_3\leq \nu_2\leq \nu_1$
and $u_0\in L^2\cap L^p$ for $p\in(3,\infty)$\footnote{Only requiring
$u_0\in L^p$ is not enough to prove global existence, since if $u$ is a
global solution, then for any $\lam>0$, $u_\lam$ given by\eqref{scaling}
is also a global solution with initial data $u_{0,\lam}$. However,
as $\|u_{0,\lam}\|_{L^p}$ is not scaling invariant, it would become large
by choosing $\lam$ to be either large or small, in another word,
we can prove global existence of strong solution for any initial data
in $L^p$, which is obviously far beyond what we can do right now.}.
Then \eqref{1.1} would admit a global strong solution, provided $\nu_1$
is so large that
\begin{equation}\label{condinu11}
\nu_1\gg\nu_2^{-p-1}\nu_3^{-5(p-3)-1}\|u_0\|_{L^2}^{4(p-3)}
\|u_0\|_{L^p}^{2p}.
\end{equation}
}\end{col}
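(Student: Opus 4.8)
The plan is to combine the finite-$p$ lower bound on the lifespan from Theorem~\ref{thm1} with the global $L^2$ energy dissipation, using the latter to force the solution to become small enough that a local result can be iterated to global time. First I would record the basic energy estimate: testing \eqref{1.1} with $u$ and using $\dive u=0$ to kill the pressure and convection terms yields
\begin{equation}\label{energyplan}
\frac12\frac{d}{dt}\|u\|_{L^2}^2+\nu_1\|\pa_1 u\|_{L^2}^2+\nu_2\|\pa_2 u\|_{L^2}^2+\nu_3\|\pa_3 u\|_{L^2}^2=0,
\end{equation}
so that in particular $\|u(t)\|_{L^2}\leq\|u_0\|_{L^2}$ for all $t$ in the interval of existence, and integrating in time gives a uniform bound on $\nu_1\|\pa_1 u\|_{L^2([0,T];L^2)}^2$, and similarly for the other directions, in terms of $\|u_0\|_{L^2}^2$.

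Next I would use \eqref{energyplan} to produce a time at which the solution is small in the scaling-critical sense relevant to Theorem~\ref{thm1}. The idea, following the mechanism alluded to after the remark (cf.\ \cite{GIP}), is that since $\nu_1\int_0^T\|\pa_1 u\|_{L^2}^2\,dt\leq\frac12\|u_0\|_{L^2}^2$, there must exist some time $t_0$, with $t_0$ controlled from above, at which $\|\pa_1 u(t_0)\|_{L^2}^2$ is as small as $\frac{\|u_0\|_{L^2}^2}{\nu_1 t_0}$. Interpolating this decay of a derivative together with the conserved $L^2$ bound, one gets smallness of an intermediate norm controlling $\|u(t_0)\|_{L^p}$; the point of taking $\nu_1$ large is precisely that this smallness can be made to beat the factors appearing in \eqref{life div}. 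Concretely, I would estimate $\|u(t_0)\|_{L^p}$ by a Gagliardo--Nirenberg interpolation between $\|u(t_0)\|_{L^2}$ and $\|\pa_1 u(t_0)\|_{L^2}$ (together with the other dissipated derivatives), so that $\|u(t_0)\|_{L^p}^{2p}$ carries a negative power of $\nu_1$.

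The final step is the restart argument. At the time $t_0$ I treat $u(t_0)$ as new initial data and apply the lower bound \eqref{life div} again: the solution starting from $u(t_0)$ exists for an additional time at least
\[
C\nu_2^{\frac{p}{p-3}}(\nu_1\nu_2\nu_3)^{\frac{1}{p-3}}\|u(t_0)\|_{L^p}^{-\frac{2p}{p-3}}.
\]
Plugging in the interpolated smallness of $\|u(t_0)\|_{L^p}$ and matching powers, the condition \eqref{condinu11} is exactly what guarantees that this restarted lifespan exceeds $t_0$, so that the solution never reaches a blow-up time; hence a standard continuation argument closes the interval of existence to all of $[0,\infty)$ and yields the global strong solution. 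I expect the main obstacle to be bookkeeping the powers: one must choose $t_0$ (equivalently, the scaling parameter implicit in selecting the good time) and route the anisotropic dissipation through the interpolation so that all the powers of $\nu_1,\nu_2,\nu_3$ and of $\|u_0\|_{L^2},\|u_0\|_{L^p}$ align precisely with the exponents displayed in \eqref{condinu11}; getting the exact exponent $-5(p-3)-1$ on $\nu_3$ and $-p-1$ on $\nu_2$ will require care about which derivative's dissipation is exploited and how the anisotropy is distributed in the interpolation inequality.
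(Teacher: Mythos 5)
Your opening moves (the energy identity, and the idea that large $\nu_1$ should let the dissipation produce a ``good'' time $t_0$ at which the solution is small) are in the same spirit as the paper, but the closing step has a genuine gap. You propose to restart at $t_0$ using the lifespan lower bound \eqref{life div} and to conclude from ``the restarted lifespan exceeds $t_0$'' that the solution is global. This is a non sequitur: \eqref{life div} is a \emph{subcritical} bound, so no matter how small $\|u(t_0)\|_{L^p}$ becomes it only yields a finite additional existence time, and existence on $[0,t_0+T_1]$ with $T_1>t_0$ says nothing about blow-up at a later finite time. To iterate you would need either a uniform-in-time control of $\|u(t)\|_{L^p}$ (which you do not have) or, as the paper does, to land in a \emph{scaling-critical} smallness regime and invoke a small-data \emph{global} well-posedness theorem. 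Concretely, the paper uses \eqref{condinu11} together with \eqref{life div} to get $T^\star\gg\nu_3^{-5}\|u_0\|_{L^2}^4$, applies Chebyshev's inequality to $2\sum_i\nu_i\|\pa_iu\|_{L^2_t(L^2)}^2\leq\|u_0\|_{L^2}^2$ on $(0,T^\star)$ to find $t_0$ with $2\nu_3\|\nabla u(t_0)\|_{L^2}^2\leq\|u_0\|_{L^2}^2/T^\star$, and deduces $\|u(t_0)\|_{L^2}\|\nabla u(t_0)\|_{L^2}\ll\nu_3^2$ --- i.e.\ critical ($\dot H^{1/2}$-type) smallness relative to the worst viscosity --- after which the classical small-data global theory applies. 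No restart via \eqref{life div} and no $L^p$ smallness at $t_0$ is needed.

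A secondary problem is the interpolation you rely on to make $\|u(t_0)\|_{L^p}$ small. The energy estimate only controls $\|u\|_{L^2}$ and $\|\nabla u\|_{L^2}$, and Gagliardo--Nirenberg in $\R^3$ bounds $\|u\|_{L^p}$ by these quantities only for $p\leq 6$; for $p\in(6,\infty)$ the corollary still must hold, but your route breaks down there. Moreover, smallness of $\|\pa_1u(t_0)\|_{L^2}$ alone (the derivative you single out because $\nu_1$ is large) cannot control a three-dimensional $L^p$ norm; the anisotropy enters the paper's argument only through the fact that $\nu_1$ large makes $T^\star$ large, while the smallness at $t_0$ is extracted from the \emph{full} gradient via the smallest coefficient $\nu_3$.
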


Roughly speaking, this corollary says that the strong solution to \eqref{1.1}
will become global provided only one of the viscous coefficients is sufficiently large,
which is consistent with the results obtained in \cite{LZ2, PZ4}.
Here we give an alternative point of view.
\smallskip

Next, notice there is no $\nu_1$ in \eqref{infty},
we shall give a refined estimate for the $p=\infty$ case.

\begin{thm}\label{thm2}
{\sl Consider \eqref{1.1} with $0<\nu_3\leq \nu_2\leq \nu_1$ and
$u_0\in L^{\infty}$ satisfying
$|u_0(x)|\rightarrow0$ as $|x|\rightarrow \infty$.
Let $T^\star$ be the lifespan of the corresponding strong solution.
Then we have
$$\lim_{\nu_1\rightarrow\infty}T^\star=\infty.$$
}\end{thm}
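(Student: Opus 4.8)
The plan is to derive the qualitative limit from the \emph{finite}-$p$ bound \eqref{life div}, whose power of $\nu_1$ is strictly positive, using the decay hypothesis to replace the merely $L^\infty$ datum by an $L^p$ one up to a small remainder. Fix a target time $M>0$; it suffices to exhibit $\nu_1^\ast(M)$ such that $T^\star>M$ whenever $\nu_1\geq\nu_1^\ast$. Since $|u_0(x)|\to0$ as $|x|\to\infty$, for every $R>0$ I would split $u_0=a_0+b_0$ with
$$a_0\eqdef\PP(\chi_R u_0),\qquad b_0\eqdef\PP\bigl((1-\chi_R)u_0\bigr),$$
where $\chi_R(x)=\chi(x/R)$ is a smooth cut-off equal to $1$ on $\{|x|\le R\}$ and $\PP$ is the Leray projection. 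As $\chi_R u_0$ is bounded with compact support and $\PP$ is bounded on $L^q$ for $1<q<\infty$, we have $a_0\in L^2\cap L^p$ for every finite $p>3$; and the decay of $u_0$ makes $(1-\chi_R)u_0$, hence $b_0$, small in the scaling-critical norm governing the local theory as $R\to\infty$.

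First I would let $\bar u$ be the strong solution of \eqref{1.1} issued from $a_0$. Because $\|a_0\|_{L^p}$ is now a fixed finite number, estimate \eqref{life div} of Theorem \ref{thm1} shows that the lifespan $\bar T$ of $\bar u$ tends to $\infty$ as $\nu_1\to\infty$, so $\bar T>M$ once $\nu_1$ is large. Moreover the basic energy law yields the $\nu_1$-uniform bounds $\|\bar u(t)\|_{L^2}\le\|a_0\|_{L^2}$ and $\sum_i\nu_i\int_0^M\|\pa_i\bar u\|_{L^2}^2\,dt\le\|a_0\|_{L^2}^2$, whence in particular $\int_0^M\|\pa_1\bar u\|_{L^2}^2\,dt\le\nu_1^{-1}\|a_0\|_{L^2}^2\to0$.

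Next I would set $u=\bar u+v$ and derive the perturbed system
$$\pa_t v-(\nu_1\pa_1^2+\nu_2\pa_2^2+\nu_3\pa_3^2)v=-\PP\,\dive\bigl(\bar u\otimes v+v\otimes\bar u+v\otimes v\bigr),\qquad v|_{t=0}=b_0,$$
and run the local machinery of Theorem \ref{thm1} for $v$, now with small datum $b_0$ but with the extra linear coupling to the background $\bar u$. The aim is to show that $v$ stays below the threshold of the local existence criterion throughout $[0,M]$, so that $u=\bar u+v$ is a genuine strong solution there and hence $T^\star>M$.

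The hard part will be closing this last step uniformly in $\nu_1$. The linear terms $\bar u\cdot\na v+v\cdot\na\bar u$ involve \emph{all} derivatives of $\bar u$, and a crude Gronwall argument only bounds $v$ by $\|b_0\|\,e^{C\int_0^M\|\bar u\|_X}$; since shrinking the remainder forces $R\to\infty$ and with it $\|a_0\|_{L^p}$, and thus the background norm, to grow, the product of the remainder size and this exponential need not tend to zero. So the estimate for $v$ cannot rest on brute-force Gronwall but must exploit the structure forced by a large $\nu_1$: the uniform bound above drives $\pa_1\bar u\to0$, so the background tends to an $x_1$-independent field, for which the $(x_2,x_3)$-dynamics is of two-dimensional type and one component is essentially passively transported. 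Capturing this in the anisotropic norms $\LB$ and $\AB$ used throughout the paper — so that the coupling of $v$ to $\bar u$ is controlled by quantities that are either genuinely small for large $\nu_1$ or close without exponential loss — is the crux, and it is exactly here that the hypothesis $|u_0(x)|\to0$ is indispensable. Once $v$ is controlled on $[0,M]$ for all large $\nu_1$, letting $M\to\infty$ gives $\lim_{\nu_1\to\infty}T^\star=\infty$.
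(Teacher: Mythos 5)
There is a genuine gap, and you have in fact located it yourself: with your assignment of roles the perturbation estimate does not close, and the ``crux'' you defer to (the background becoming $x_1$-independent for large $\nu_1$) is not a workable substitute --- even a two-dimensional background of size $\|a_0\|_{L^p}\to\infty$ still produces a Gronwall factor $e^{C\int_0^M\|\na\bar u\|}$ that overwhelms the smallness of $b_0$. The missing idea is simply to swap the roles of the two pieces. The paper splits $u_0=u_{1,0}+u_{2,0}$ with $u_{1,0}\in L^4\cap L^\infty$ and $\|u_{2,0}\|_{L^\infty}<\e$, and takes the \emph{small} piece as the background: by the $p=\infty$ case of Theorem \ref{thm1} (the fixed-point argument), the solution $u_2$ issued from $u_{2,0}$ exists on $[0,T_2]$ with $T_2=C\nu_3\e^{-2}$ and, crucially, satisfies $\|u_2(t)\|_{L^\infty}<2\e$ there. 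The perturbation is then $u_1=u-u_2$, with the \emph{large but $L^4$} datum $u_{1,0}$. In the $L^4$ energy estimate for $u_1$, every linear coupling term $u_1\cdot\na u_2$, $u\cdot\na u_1$, and the cross terms in the pressure carry the factor $\|u_2\|_{L^\infty}<2\e$, so they contribute only $C\nu_3^{-1}t\,\e^2\|u_1\|_{L^4}^4$ --- no exponential loss --- while the cubic self-interaction of $u_1$ is absorbed by the viscosity via the anisotropic inequality \eqref{L43}, producing the term $C\nu_3^{-1}t^{1/4}(\nu_1\nu_2\nu_3)^{-1/4}\|u_{1,0}\|_{L^4}^6$, which is beaten by taking $\nu_1$ large. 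Choosing $\e=\nu_1^{-1}$ then sends all three time thresholds to infinity.

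Two further points where your version would need repair even after the swap. First, you propose to solve the perturbed system for $v$ as an independent evolution; the paper instead performs an a priori estimate on $u_1=u-u_2$, where $u$ is the actual strong solution already furnished by the $p=\infty$ local theory, so no separate existence theory for the perturbed system is needed. Second, your use of \eqref{life div} on the compactly supported piece is not how the quantitative gain in $\nu_1$ enters: the positive power of $\nu_1$ in the final bound comes from the $L^4$ energy inequality for the perturbation (via \eqref{L43}), not from invoking Theorem \ref{thm1} for the background. Your overall instinct --- exploit the decay of $u_0$ to split off a small remainder and use the positive power of $\nu_1$ from the finite-$p$ theory --- is the right one, but the argument only closes when the small piece is the one that gets propagated in $L^\infty$.
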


\begin{rmk}
The requirement that $u_0$ vanishes at infinity is quite natural
for N-S, which can be deduced from (but does not imply) $u_0\in L^p$
for any finite $p$. Thus not the same as \eqref{life div},
here we can not get an explicit rate how $T^\star$ tends to infinity
as $\nu_1\rightarrow\infty$.
\end{rmk}

In the following, let us turn to the more difficult situation that some of the
$\nu_i$'s vanish. First, we consider the case when one of the
viscous coefficients vanishes, i.e. $0=\nu_3<\nu_2\leq\nu_1$:
 \begin{equation}\label{ANS}
     \left\{
     \begin{array}{l}
     \partial_tu-(\nu_1\partial^2_1+\nu_2\partial^2_2)u+u\cdot\nabla u+\nabla P=0,\qquad (t,x)\in \mathbb{R}^{+}\times\mathbb{R}^{3},\\
     \dive u = 0,\\ u|_{t=0} =u_0.
     \end{array}
     \right.
 \end{equation}
This anisotropic Navier-Stokes system is used to
simulate turbulent diffusion in the Ekman layer in geophysical fluid mechanics.

Since \eqref{ANS} only has horizontal dissipation,
it is reasonable to use functional spaces
which distinguish horizontal derivatives from the vertical one,
for instance the anisotropic Besov space $B^{0,\f12}$,
see Definition \ref{defBhalf} below.
Paicu \cite{Pa02} proved the local well-posedness of \eqref{ANS}
in $B^{0,\f12}$, and also the global well-posedness
with small initial data in $B^{0,\f12}.$  This result corresponds to the
Fujita-Kato's theorem (\cite{fujitakato}) for the classical N-S.
Later, the smallness condition for global well-posedness was weaken by
the authors in \cite{PZ1, Zhang10} to be
$$\|\uh_0\|_{\cB^{0,\f12}}\exp\bigl(C\|u^3_0\|_{\cB^{0,\f12}}^4/\nu_2^4\bigr)
\ll \nu_2.$$
We refer the readers to \cite{CDGG, CZ07, Iftimie, PZ1} for well-posedness
of \eqref{ANS} in other functional spaces.

However, there seems little work concerning the lifespan of
this anisotropic system \eqref{ANS} except \cite{LZZ2}.
And our result states as follows:

\begin{thm}\label{thm3}
{\sl Consider \eqref{ANS} with $0<\nu_2\leq \nu_1$,
$u_0$ and $\nabla u_0$ in $\BB$.
Then for any $\al\in(0,\f12)$, the maximal existence time $T^\star$
of the corresponding strong solution satisfies
\begin{equation}\label{thm3.1}
|T^\star|^\al\gtrsim \f{\cC(\al)}
{\|u_0\|_{\BB}^{1-2\al}\|\nabla u_0\|_{\BB}^{2\al}},
\end{equation}
where
\begin{equation}\label{thm3.2}
\cC(\al)\eqdef
\left\{
     \begin{split}
&\nu_1^{\f14}\nu_2^{\f34-\al},
~\text{ when }\al\in(0,\f18];\\
&\nu_1^{\f38-\al}\nu_2^{\f58},
~\text{ when }\al\in(\f18,\f14];\\
&\nu_1^{\f14-\f\al2}\nu_2^{\f34-\f\al2},
~\text{ when }\al\in(\f14,\f12).
\end{split}
     \right.
\end{equation}
}\end{thm}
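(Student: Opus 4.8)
The plan is to run an anisotropic Littlewood--Paley energy argument in the Chemin--Lerner spaces $\LTB$ and $\ATB$, propagating both $u$ and $\na u$ simultaneously, and then to convert the resulting a priori estimates into a lower bound for the lifespan. First I would apply the vertical block $\Dvl$ to \eqref{ANS} and take the $L^2$ inner product with $\Dvl u$; since $\dive u=0$ the pressure drops out and one obtains
\begin{equation*}
\f12\f{d}{dt}\|\Dvl u\|_{L^2}^2+\nu_1\|\pa_1\Dvl u\|_{L^2}^2+\nu_2\|\pa_2\Dvl u\|_{L^2}^2
=-\bigl(\Dvl(u\cdot\na u)\,\big|\,\Dvl u\bigr)_{L^2}.
\end{equation*}
Multiplying by $2^\ell$ and summing yields control of $\|u\|_{\LTB}$ together with the horizontal dissipation $\nu_1\|\pa_1 u\|_{\ATB}^2+\nu_2\|\pa_2 u\|_{\ATB}^2$. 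Applying $\na$ to the equation and repeating the computation gives the analogous estimate for $\|\na u\|_{\LTB}$ and its horizontal dissipation. Since $\nu_2\le\nu_1$, the genuinely weak smoothing direction is $x_2$, which is why $\nu_2$ will end up carrying the larger power in the final bound.

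The core of the argument is the estimate of the trilinear term $\bigl(\Dvl(u\cdot\na u)\,|\,\Dvl u\bigr)_{L^2}$. I would split the convection into its horizontal transport part $\uh\cdot\nablah u$ and its vertical transport part $u^3\pa_3 u$, and decompose each by Bony's paraproduct in the vertical variable. The horizontal part is harmless: every derivative falling on the solution is horizontal and can be absorbed by the $\nu_1,\nu_2$ dissipation after a Young inequality. The vertical part is the real difficulty, since there is no vertical dissipation to absorb a $\pa_3$. Here the divergence-free condition is decisive. In the paraproduct piece $\Svl u^3\,\pa_3\Dvl u$, integration by parts against $\Dvl u$ turns the transport remainder into a factor $\pa_3\Svl u^3$, and the accompanying commutator terms likewise produce, to leading order, a factor $\pa_3 u^3$. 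Since $\pa_3 u^3=-\diveh\uh$ by incompressibility, every such dangerous vertical derivative is converted into horizontal derivatives that the dissipation can control, while the vertical regularity needed on the remaining factors of $u$ is supplied precisely by the hypothesis $\na u_0\in\BB$, whose propagation along the flow compensates for the absent vertical viscosity.

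Summing in $\ell$ with the weight $2^{\ell/2}$ and the usual summable-sequence bookkeeping, these estimates close into a coupled system of integral inequalities for the pair $\bigl(\|u\|_{\LTB},\|\na u\|_{\LTB}\bigr)$, in which each nonlinear contribution appears multiplied by a horizontal-dissipation factor. Using Young's inequality I would absorb the dissipation factors, producing denominators that are products of powers of $\nu_1$ and $\nu_2$; inserting a H\"older inequality in time then extracts the factor $|T|^\al$ on the left-hand side of \eqref{thm3.1}. A standard continuity argument shows that the bound $\|u\|_{\LTB}+\|\na u\|_{\LTB}\lesssim\|u_0\|_{\BB}+\|\na u_0\|_{\BB}$ persists exactly as long as $|T|^\al$ stays below the threshold $\cC(\al)\,\|u_0\|_{\BB}^{2\al-1}\|\na u_0\|_{\BB}^{-2\al}$, which is the asserted lower bound \eqref{thm3.1} for $T^\star$.

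The hardest part will be the sharp tracking of the two viscosities through the vertical-transport estimate, and this is also where the three regimes of $\cC(\al)$ originate. After using $\pa_3 u^3=-\diveh\uh$ one is free to distribute the resulting horizontal derivatives between the $\nu_1$- and the $\nu_2$-dissipation, and one may further interpolate the vertical-derivative cost between the $\BB$ norm of $u$ and that of $\na u$; the exponent $\al$ records this interpolation. Optimizing the allocation under the constraint $\nu_2\le\nu_1$ produces a piecewise-optimal split whose break points are exactly $\al=\f18$ and $\al=\f14$, yielding the three formulas in \eqref{thm3.2}. As a consistency check, the exponents agree at both break points, and the total power of the two viscosities equals $1-\al$ throughout.
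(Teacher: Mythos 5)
Your proposal follows essentially the same route as the paper: Littlewood--Paley energy estimates in $\wt L^\infty_T(B^{0,\f12})$ for both $u$ and $\nabla u$, a vertical Bony decomposition with the divergence-free substitution $\pa_3u^3=-\diveh\uh$ to tame the vertical transport, a H\"older-in-time interpolation to produce the factor $t^{\al}$, and a continuity argument, with the three regimes of $\cC(\al)$ arising from how the time-interpolation exponent is distributed over the factors $\p_1\uh$, $\p_2\uh$ in the term $\uh\cdot\nablah\nabla u$ (the paper's Lemmas \ref{lem2.5}--\ref{lem2.6} and Remark \ref{rmk2.2}). Your consistency checks on the exponents at $\al=\f18$ and $\al=\f14$ and on the total viscosity power $1-\al$ are correct.
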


\begin{rmk}
In the sense of scaling, \eqref{thm3.1} corresponds to \eqref{life div}
with $p=\f{3}{1-2\al}$, which varies from $3$ to $\infty$ when
$\al$ varies from $0$ to $\f12$. One can see this more clearly by writing
\eqref{life div} as
$$|T^{\star}|^{\f{p-3}{2p}}\gtrsim\f{\nu^{\f12}_2(\nu_1\nu_2\nu_3)^{\f{1}{2p}}}
{\|u_0\|_{L^p}}\gtrsim\f{\nu^{\f12}_2(\nu_1\nu_2\nu_3)^{\f{1}{2p}}}
{\|u_0\|_{L^3}^{\f3p}\|u_0\|_{L^\infty}^{\f{p-3}p}},\quad\text{ when }p\in(3,\infty).$$
Due to the invalidity of the product law in the end-point Besov spaces,
here we did not consider the borderline case $\al=\f12$ for simplification.
But we still can expect from the expression \eqref{thm3.2}
that when $\al=\f12$, there would be no $\nu_1$
in the estimate for $T^\star$, just the same as \eqref{infty}.
\end{rmk}

\begin{col}\label{col1.2}
{\sl Consider \eqref{ANS} with $0<\nu_2\leq \nu_1$ and $u_0\in\BB$.
The corresponding strong solution will exist globally in time provided
$$\nu_1\gg \nu_2^{-3}\|u_0\|^{4}_{\BB}.$$
Moreover, the following estimate
holds uniformly in time:
$$\|u\|_{\LB}+\nu_1^{\f12}\|\p_1 u\|_{\AB}+\nu_2^{\f12} \|\p_2 u\|_{\AB}
\leq 2\| u_0\|_{\BB},\quad\forall\ t>0.$$
}\end{col}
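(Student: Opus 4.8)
The plan is to obtain global existence as a consequence of a scaling-critical a priori bound in $\BB$, combined with the local well-posedness of \cite{Pa02} and a continuation argument; the second (``moreover'') assertion is exactly this a priori bound, so the whole corollary reduces to establishing it. The smallness condition $\nu_1\gg\nu_2^{-3}\|u_0\|_{\BB}^4$ is precisely the threshold at which the nonlinear interaction can be absorbed by the horizontal dissipation. Accordingly I would set
$$X(T)\eqdef\|u\|_{\LTB}+\nu_1^{\f12}\|\p_1 u\|_{\ATB}+\nu_2^{\f12}\|\p_2 u\|_{\ATB},$$
and show by a bootstrap that the bound $X(T)\leq 2\|u_0\|_{\BB}$ persists on the whole maximal interval, which by the blow-up criterion forces $T^\star=\infty$.

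First I would run the anisotropic Littlewood-Paley energy method underlying Theorem \ref{thm3}: apply the vertical block $\Dvk$ to \eqref{ANS}, take the $L^2$ inner product with $\Dvk u$, integrate in time, then multiply by $2^{k}$ and sum over $k$ to recover the $\BB$ structure. Since there is no vertical viscosity, this produces
$$\|u\|_{\LTB}^2+2\nu_1\|\p_1 u\|_{\ATB}^2+2\nu_2\|\p_2 u\|_{\ATB}^2\leq\|u_0\|_{\BB}^2+C\,\mathcal{N}(T),$$
where $\mathcal{N}(T)$ gathers the frequency-localized contributions of the convection term. Using Bony's decomposition in the vertical variable I would split these into the horizontal transport $\uh\cdot\nablah u$ and the vertical transport $u^3\p_3 u$, the latter being the delicate part.

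The core is the estimate of $\mathcal{N}(T)$ by the dissipation norms with the correct asymmetric weights. For the vertical-transport part I would invoke the divergence-free relation $\p_3 u^3=-\diveh\uh$ to integrate by parts in $x_3$ and trade the undissipated vertical derivative for horizontal ones, so that each differentiated factor is measured by $\|\p_1 u\|_{\ATB}$ or $\|\p_2 u\|_{\ATB}$ while the remaining factor sits in $\|u\|_{\LTB}$; the vertical index $\f12$ of $\BB$ is what makes this trade dimensionally admissible. Distributing the horizontal derivatives by anisotropic Bernstein inequalities and Young's inequality — charging the direction of the smaller coefficient $\nu_2$ more heavily — should yield, under the induction hypothesis $X(T)\leq 2\|u_0\|_{\BB}$, a bound of the schematic form $\mathcal{N}(T)\lesssim\nu_1^{-\f12}\nu_2^{-\f32}\|u_0\|_{\BB}^2\,X(T)^2$. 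I expect this anisotropic nonlinear estimate to be the main obstacle: obtaining the weights as $\nu_1^{-1/2}\nu_2^{-3/2}$ rather than a symmetric combination is where the horizontal-only dissipation and the unequal viscosities genuinely interact, and it is what forces the exponent $-3$ on $\nu_2$ in the final condition.

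Granting such a bound, the bootstrap closes. Rewriting the hypothesis as $\nu_1^{-\f12}\nu_2^{-\f32}\|u_0\|_{\BB}^2\ll1$ and inserting the nonlinear estimate into the energy inequality, the $X(T)^2$ term is absorbed by the left-hand side, leaving $X(T)\leq\f32\|u_0\|_{\BB}$, strictly better than the assumed bound. A standard continuity argument then propagates $X(T)\leq 2\|u_0\|_{\BB}$ up to $T^\star$; since this controls the $\LTB$ and $\ATB$ norms uniformly, the continuation criterion of \cite{Pa02} rules out finite-time blow-up, giving the global solution together with the asserted uniform estimate.
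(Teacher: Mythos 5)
Your proposal is correct and follows essentially the same route as the paper: the paper's proof is just the $\alpha=0$ case of the trilinear estimate \eqref{lem2.5h}--\eqref{lem2.5v} of Lemma \ref{lem2.5} (which already encodes the vertical Bony decomposition and the $\p_3u^3=-\diveh\uh$ trade you describe), inserted into the same bootstrap on $X(T)=\|u\|_{\LTB}+\nu_1^{\f12}\|\p_1u\|_{\ATB}+\nu_2^{\f12}\|\p_2u\|_{\ATB}$. The absorption condition there reads $C\nu_1^{-\f18}\nu_2^{-\f38}\|u_0\|_{\BB}^{\f12}<\f12$, which is equivalent to your $\nu_1\gg\nu_2^{-3}\|u_0\|_{\BB}^{4}$.
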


Next, let us focus on the case when two of the viscous coefficients vanish:
\begin{equation}\label{eq4}
     \left\{
     \begin{array}{l}
     \partial_tu+u\cdot\nabla u-\nu_3\partial^2_3u+\nabla P=0,\qquad (t,x)\in \mathbb{R}^{+}\times\mathbb{R}^{3},\\
     \dive u = 0,\\ u|_{t=0} =u_0.
     \end{array}
     \right.
\end{equation}

Our result concerning the lifespan of
this anisotropic system \eqref{eq4} states as follows:

\begin{thm}\label{thm4}
{\sl Consider \eqref{eq4} with $\nu_3>0$ and $u_0\in H^{s_1,0}$
for any $s_1>2$, where the mixed Sobolev norm
$H^{s_1,0}=L^2_\v(H^{s_1}_\h)$ is defined as follows:
$$\|a\|_{H^{s_1,0}}^2=\int_{\R^3}(1+|\xi_\h|)^{2s_1}|\wh a(\xi)|^2\,d\xi,
~\text{ where }~\xi=(\xi_\h,\xi_3)\andf\xi_\h=(\xi_1,\xi_2).$$
Then \eqref{eq4} has a unique solution
$u\in L^\infty_{T^\ast}(H^{s_1,0})$ with
$\pa_3 u\in L^2_{T^\ast}(H^{s_1,0}),$
where
\begin{equation}\label{thm4.1}
T^\ast\geq C\min \Big\{\f{\nu_3^{\f13}}{\|u_0\|_{H^{s_1,0}}^{\f43}}\,,\,
\f{\nu_3^3}{\|u_0\|_{H^{s_1,0}}^4}\Big\}.
\end{equation}
}\end{thm}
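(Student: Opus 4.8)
The plan is to prove \eqref{thm4.1} by an anisotropic energy method in $H^{s_1,0}$, and to deduce existence and uniqueness from the same a priori bound via an approximation scheme. Write $\Lam_\h^{s_1}$ for the horizontal Fourier multiplier with symbol $(1+|\xi_\h|)^{s_1}$, so that $\|a\|_{H^{s_1,0}}=\|\Lam_\h^{s_1}a\|_{L^2}$, and set $E(t)\eqdef\|u(t)\|_{H^{s_1,0}}^2$ and $D(t)\eqdef\|\pa_3 u(t)\|_{H^{s_1,0}}^2$. First I would apply $\Lam_\h^{s_1}$ to \eqref{eq4} and take the $L^2$ inner product with $\Lam_\h^{s_1}u$. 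Since $\Lam_\h^{s_1}$ commutes with $\na$, $\pa_3$ and $\dive$, the pressure term drops by $\dive u=0$ and the dissipation contributes exactly $\nu_3 D$, while in the convection term the splitting $(\Lam_\h^{s_1}(u\cdot\na u),\Lam_\h^{s_1}u)=(u\cdot\na\Lam_\h^{s_1}u,\Lam_\h^{s_1}u)+([\Lam_\h^{s_1},u\cdot\na]u,\Lam_\h^{s_1}u)$ kills the leading piece: the first inner product vanishes because $\dive u=0$. This leaves the identity $\f12\f{d}{dt}E+\nu_3 D=-([\Lam_\h^{s_1},u\cdot\na]u,\Lam_\h^{s_1}u)$, so everything hinges on the commutator.

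Next I would estimate the commutator by splitting $u\cdot\na=\uh\cdot\nablah+u^3\pa_3$ and treating the two pieces separately, fiber by fiber in $x_3$. For fixed $x_3$ the horizontal Kato--Ponce commutator estimate in $\R^2$ gives $\|[\Lam_\h^{s_1},f]g\|_{L^2_\h}\lesssim\|\nablah f\|_{L^\infty_\h}\|g\|_{H^{s_1-1}_\h}+\|f\|_{H^{s_1}_\h}\|g\|_{L^\infty_\h}$; applied with $f=u^j$ and $g=\pa_j u$ this loses no horizontal derivatives, and the two-dimensional embeddings $H^{s_1}_\h\hookrightarrow L^\infty_\h$ together with $H^{s_1}_\h\hookrightarrow W^{1,\infty}_\h$ (equivalently $H^{s_1-1}_\h\hookrightarrow L^\infty_\h$) convert every $L^\infty_\h$ factor into an $H^{s_1}_\h$ factor — the latter embedding is precisely what forces $s_1>2$. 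Integrating in $x_3$ and using the one-dimensional vertical Gagliardo--Nirenberg inequality $\|g\|_{L^\infty_\v}\lesssim\|g\|_{L^2_\v}^{1/2}\|\pa_3 g\|_{L^2_\v}^{1/2}$ to turn the $L^\infty_\v$-norm of the scalar quantity $x_3\mapsto\|u(\cdot,x_3)\|_{H^{s_1}_\h}$ into a factor $E^{1/4}D^{1/4}$, I expect the horizontal piece to be bounded by $E^{5/4}D^{1/4}$ and the vertical piece (where a genuine $\pa_3 u$ is paired into the dissipation via Cauchy--Schwarz in $x_3$) by $E^{3/4}D^{3/4}$, giving
\begin{equation*}
\f12\f{d}{dt}E+\nu_3 D\lesssim E^{\f54}D^{\f14}+E^{\f34}D^{\f34}.
\end{equation*}

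To conclude I would absorb both right-hand terms into $\nu_3 D$ by Young's inequality, which is exactly where the two competing powers of $\nu_3$ arise: $E^{5/4}D^{1/4}\lesssim\f{\nu_3}{4}D+\nu_3^{-1/3}E^{5/3}$ and $E^{3/4}D^{3/4}\lesssim\f{\nu_3}{4}D+\nu_3^{-3}E^{3}$, leaving the Riccati-type inequality $\f{d}{dt}E\lesssim\nu_3^{-1/3}E^{5/3}+\nu_3^{-3}E^{3}$. A continuity argument keeping $E(t)\leq 2E(0)$ then shows that the time to double $E$ from $E(0)=\|u_0\|_{H^{s_1,0}}^2$ is bounded below by a constant times $\min\{\nu_3^{1/3}\|u_0\|_{H^{s_1,0}}^{-4/3},\,\nu_3^{3}\|u_0\|_{H^{s_1,0}}^{-4}\}$, which is \eqref{thm4.1}; the first threshold comes from the $E^{5/3}$ nonlinearity and the second from the $E^3$ one. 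Existence of a solution on $[0,T^\ast)$ with $u\in L^\infty_{T^\ast}(H^{s_1,0})$ and $\pa_3 u\in L^2_{T^\ast}(H^{s_1,0})$ follows by running this identical estimate on a Friedrichs (frequency-truncated) approximation and passing to the limit, and uniqueness follows from an energy estimate on the difference of two solutions in a suitable, possibly lower-order, anisotropic norm, the vertical dissipation again being used to tame the vertical transport.

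The main obstacle, and the heart of the matter, is the nonlinear estimate in the presence of only vertical dissipation: no smoothing whatsoever is available in the horizontal variables, so every horizontal derivative of order above $s_1$ must be defeated purely by the commutator structure, and the single scalar quantity $\nu_3 D$ must simultaneously absorb the vertical transport term and the $L^\infty_\v$ losses produced by the anisotropic embeddings. Obtaining the precise exponents $E^{5/4}D^{1/4}$ and $E^{3/4}D^{3/4}$ — rather than cruder bounds — is what yields the sharp, scaling-consistent pair of thresholds in \eqref{thm4.1}; a careless distribution of $L^\infty$ and $L^2$ norms among the factors would spoil the powers of $\nu_3$.
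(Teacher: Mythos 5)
Your proposal is correct and follows the same overall strategy as the paper: an anisotropic energy estimate in $H^{s_1,0}$ performed fiberwise in $x_3$, with $s_1>2$ used precisely for the embedding $H^{s_1-1}_\h\hookrightarrow L^\infty_\h(\R^2)$, vertical one-dimensional interpolation converting the $x_3$-integration into the two terms $E^{5/4}D^{1/4}$ and $E^{3/4}D^{3/4}$, and Young's inequality plus a continuity argument producing exactly the two competing thresholds in \eqref{thm4.1}. The packaging of the nonlinear estimate is genuinely different, though. You cancel the full transport term $(u\cdot\na\Lam^{s_1}_\h u,\Lam^{s_1}_\h u)_{L^2(\R^3)}$ at once using $\dive u=0$ and control the remainder by a single two-dimensional Kato--Ponce commutator estimate applied fiber by fiber; the paper instead peels off one horizontal gradient, splits each of the horizontal and vertical contributions into three pieces, and treats them respectively with the Sobolev product law (Lemma \ref{product law}), the commutator estimate of Lemma \ref{communitor} (which requires $s_1-1>1$, the same threshold as your embedding), and an integration by parts for the transport pieces $\cD_3$, $\cE_3$ --- the latter carried out only after integrating in $x_3$, which is why the paper keeps the whole estimate in integrated form and uses $L^4_\v$ interpolation where you use $L^\infty_\v$; the resulting exponents are identical. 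Your route is somewhat more economical, at the mild price of invoking Kato--Ponce for the inhomogeneous horizontal multiplier $(1+|\xi_\h|)^{s_1}$ rather than the standard Bessel potential (these are equivalent, but this should be said). Your treatment of existence via Friedrichs truncation and of uniqueness via an energy estimate on the difference matches the paper, which carries out the uniqueness step in plain $L^2$ with Gronwall, using the a priori bound \eqref{4.25} to verify integrability of $\|\nablah u_2\|_{L^\infty}+\nu_3^{-1}\|u_2\|_{L^\infty}^2$ in time.
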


\begin{rmk}
Although both \eqref{ANS} and \eqref{eq4} are the systems with partial dissipation,
but actually \eqref{ANS} behaves close to the classical N-S with full dissipation
due to the divergence-free condition, while
\eqref{eq4} is more like the Euler equations in some aspects.

For example, it is still a big open problem that whether or not
a solution to 3-D Euler equations can form singularities at finite time.
Similarly, although \eqref{thm4.1} implies
$$\lim_{\nu_3\rightarrow\infty}T^\star
\geq\lim_{\nu_3\rightarrow\infty}T^\ast=\infty,$$
but it seems difficult to prove this solution can be global
for sufficiently large $\nu_3$, just as what we have proved
in Corollaries \ref{col1.1} and \ref{col1.2} for
\eqref{1.1} and \eqref{ANS} respectively.

On the other hand, if we simply neglect the diffusion term and view \eqref{eq4}
as Euler equations and assume initially $u_0\in H^{s_2}$
for any $s_2>\f52$, then it is well-known that the corresponding
solution exists at least to
$C\|u_0\|_{H^{s_2}}^{-1}.$
Interpolating between this and \eqref{thm4.1} gives
$$T^\star\geq C\f{\nu_3^{\al}}
{\|u_0\|_{H^{s_1,0}}^{4\al}\|u_0\|_{H^{s_2}}^{1-3\al}},
\quad\forall\ \al\in\bigl[0,\f13\bigr].$$
\end{rmk}

{\bf Organization of the paper.} In Section \ref{Sect2},
we shall introduce the functional spaces used in this paper,
and give some technical lemmas. In Section \ref{Sect3},
we shall present the proof of Theorems \ref{thm1} and \ref{thm2}.
While Section \ref{Sect4} is devoted to the proof of
Theorems \ref{thm3} and \ref{thm4}.
In the last section, we shall give a brief proof
of Corollaries \ref{col1.1} and \ref{col1.2}.
\smallskip

Let us end this section with some notations we shall use throughout this paper.
\smallskip

\noindent{\bf Notations:}
We shall always denote $C$ to be an absolute constant
which may vary from line to line.
$a\lesssim b$ means that $a\leq Cb$.
For any Hilbert space $\cH$,
we designate the $\cH$ inner product of $f$ and $g$ by $(f,g)_{\cH}$,
and we shall denote $(f,g)_{L^2(\R^3)}$ simply as $(f,g)$.

For a Banach space $B$, we shall use the shorthand $L^p_T(B)$
for $\bigl\|\|\cdot\|_B\bigr\|_{L^p(0,T;dt)}$.
$\dH^s$ (resp. $H^s$) is the standard $L^2$ based
homogeneous (resp. inhomogeneous) Sobolev spaces.
Without specially mentioning, all the norms for space variables
are taken on $\R^3$, and the subscript $\h$ (resp. $\v$) is used to denote
that the norm is taken on $\R_{x_1}\times\R_{x_2}$ (resp. $\R_{x_3}$).

\setcounter{equation}{0}
\section{Functional Spaces and Some Technical Lemmas}\label{Sect2}

For the convenience of the readers, we shall first collect
some basic facts on anisotropic Littlewood-Paley theory.
Let us  recall the following dyadic operators from \cite{BCD}:
\begin{equation}\begin{split}\label{defparaproduct}
&\Delta_ja\eqdef \cF^{-1}\bigl(\varphi(2^{-j}|\xi|)\widehat{a}\bigr),
 \quad \Delta_\ell^{\rm v}a \eqdef\cF^{-1}\bigl(\varphi(2^{-\ell}|\xi_3|)\widehat{a}\bigr),\\
&S_ja\eqdef\cF^{-1}\bigl(\chi(2^{-j}|\xi|)\widehat{a}\bigr),
\quad\ S^{\rm v}_\ell a \eqdef \cF^{-1}\bigl(\chi(2^{-\ell}|\xi_3|)\widehat{a}\bigr),
\end{split}\end{equation}
where $\xi=(\xi_1,\xi_2,\xi_3)$, $\cF a$ and
$\widehat{a}$ denote the Fourier transform of $a$,
while $\cF^{-1} a$ is the inverse Fourier transform,
$\chi(\tau)$ and $\varphi(\tau)$ are smooth functions such that
\begin{align*}
&\Supp \varphi \subset \Bigl\{\tau \in \R\,: \, \frac34 \leq
|\tau| \leq \frac83 \Bigr\}\quad\mbox{and}\quad \forall
 \tau>0\,,\ \sum_{j\in\Z}\varphi(2^{-j}\tau)=1,\\
& \Supp \chi \subset \Bigl\{\tau \in \R\,: \, |\tau| \leq
\frac43 \Bigr\}\quad\mbox{and}\quad \forall
 \tau\in\R\,,\ \chi(\tau)+ \sum_{j\geq 0}\varphi(2^{-j}\tau)=1.
\end{align*}

By using these dyadic operators, let us define the following
Besov-type spaces:

\begin{defi}\label{defBhalf}
{\sl Let us consider $u\in\mathcal{S}'(\R^3)$ satisfying
$\lim_{\ell\rightarrow-\infty}\|S_\ell^\v u\|_{L^\infty}=0$. We set
$$\|u\|_{B^{0,\f12}}\eqdef \sum_{j\in\Z}2^{\f12j}\|\D^{\v}_{j}a\|_{L^2}.$$
And the corresponding Chemin-Lerner type space (see \cite{CL}) is given by
$$\|u\|_{\wt L^p_t(B^{0,\f12})}\eqdef\sum_{j\in\Z}
2^{\f12j}\|\D^{\v}_{j}a\|_{L^p([0,t];L^2)},\quad\forall\ p\in[1,\infty].$$
}\end{defi}
\begin{rmk}
Clearly, by this definition, Minkowski's inequality implies
$$\|u\|_{L^p_t(B^{0,\f12})}\leq\|u\|_{\wt L^p_t(B^{0,\f12})},
\quad\forall\ p\in[1,\infty].$$
\end{rmk}

The following anisotropic Bernstein inequality will be
frequently used in this paper.
\begin{lem}[\cite{CZ07, Pa02}]\label{lemBern}
{\sl Let $\cB_{\rm v}$ be a ball of $\R_{x_3}$,
and $\cC_{\rm v}$ be a ring of $\R_{x_3}$.
Let $1\leq p\leq\infty$ and $1\leq q_2\leq q_1\leq \infty.$ Then there holds
\begin{align*}
\Supp\wh a\subset 2^\ell\cB_{\rm v}&\Rightarrow
\|\partial_{x_3}^\alpha a\|_{L^{p}_{\rm h}(L^{q_1}_{\rm v})}
\lesssim 2^{\ell\left(|\alpha|+(1/{q_2}-1/{q_1})\right)} \|
a\|_{L^{p}_{\rm h}(L^{q_2}_{\rm v})};\\
\Supp\wh a\subset2^\ell\cC_{\rm v}&\Rightarrow
\|a\|_{L^{p}_{\rm h}(L^{q_1}_{\rm v})} \lesssim 2^{-\ell N}
\|\partial_{x_3}^N a\|_{L^{p}_{\rm h}(L^{q_1}_{\rm v})}
\eqdef 2^{-\ell N}\sup_{|\alpha|=N}
\|\partial_{x_3}^\alpha a\|_{L^{p}_{\rm
h}(L^{q_1}_{\rm v})}.
\end{align*}
}\end{lem}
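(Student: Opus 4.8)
The plan is to reduce both inequalities to one–dimensional Bernstein estimates in the vertical frequency variable $\xi_3$, treating the horizontal variables $x_{\rm h}=(x_1,x_2)$ merely as parameters, and then to reassemble the mixed norm $L^p_{\rm h}(L^{q}_{\rm v})$ by applying Young's convolution inequality slice–by–slice in $x_3$ before integrating in $x_{\rm h}$. The point is that the frequency–support hypotheses only constrain $\xi_3$, so the associated Fourier multipliers act by convolution in the single variable $x_3$ against a rescaled Schwartz kernel, and the whole estimate becomes a scaling computation for the $L^r$ norm of that kernel.

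For the first inequality I would choose $\phi\in C_c^\infty(\R)$ with $\phi\equiv1$ on $\cB_{\rm v}$; since $\Supp\wh a\subset 2^\ell\cB_{\rm v}$ forces $\wh a(\xi)=\phi(2^{-\ell}\xi_3)\wh a(\xi)$, multiplying by the symbol $(i\xi_3)^{|\alpha|}$ gives
$$\pa_{x_3}^\alpha a = h_\ell *_{x_3} a,\qquad h_\ell\eqdef\cF^{-1}_{\xi_3}\bigl((i\xi_3)^{|\alpha|}\phi(2^{-\ell}\xi_3)\bigr),$$
where $*_{x_3}$ denotes convolution in $x_3$ only. A one–variable scaling identity shows $h_\ell(x_3)=2^{\ell(|\alpha|+1)}(\cF^{-1}h)(2^\ell x_3)$ for the fixed Schwartz function $h(\eta)\eqdef(i\eta)^{|\alpha|}\phi(\eta)$, hence $\|h_\ell\|_{L^r(\R_{x_3})}=2^{\ell(|\alpha|+1-1/r)}\|\cF^{-1}h\|_{L^r}$ for every $r\in[1,\infty]$. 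Choosing $r$ by the Young relation $1/r=1+1/q_1-1/q_2$, which is admissible precisely under our hypothesis $q_2\le q_1$, makes $1-1/r=1/q_2-1/q_1$; applying Young's inequality in $x_3$ for each fixed $x_{\rm h}$ and then taking the $L^p_{\rm h}$ norm yields the claimed bound with constant $\|\cF^{-1}h\|_{L^r}$.

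For the second inequality the ring support $\Supp\wh a\subset 2^\ell\cC_{\rm v}$ keeps $\xi_3$ away from $0$, so I may legitimately divide by $(i\xi_3)^N$: picking $\psi\in C_c^\infty(\R\setminus\{0\})$ with $\psi\equiv1$ on $\cC_{\rm v}$ and setting $\Theta(\eta)\eqdef\psi(\eta)/(i\eta)^N$, the identity $\wh a=\psi(2^{-\ell}\xi_3)\wh a$ rewrites as $\wh a=2^{-\ell N}\Theta(2^{-\ell}\xi_3)(i\xi_3)^N\wh a$, i.e. $a=2^{-\ell N}\,\theta_\ell *_{x_3}\pa_{x_3}^N a$ with $\theta_\ell\eqdef\cF^{-1}_{\xi_3}(\Theta(2^{-\ell}\cdot))$. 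Scaling gives $\|\theta_\ell\|_{L^1(\R_{x_3})}=\|\cF^{-1}\Theta\|_{L^1}$ uniformly in $\ell$, so Young's inequality with $r=1$, which leaves the vertical exponent $q_1$ unchanged, followed by the $L^p_{\rm h}$ norm, delivers the factor $2^{-\ell N}$ with no loss of integrability.

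The computation is otherwise routine; the step requiring the most care is the anisotropic bookkeeping—verifying that Young's inequality applied in $x_3$ alone, with $x_{\rm h}$ frozen, produces a bound in which the vertical kernel norm $\|h_\ell\|_{L^r_{\rm v}}$ factors cleanly out of the $L^p_{\rm h}$ integration, and that the scaling exponent assembles to exactly $|\alpha|+1/q_2-1/q_1$. This is what makes the mixed–norm statement genuinely anisotropic rather than a direct corollary of the classical isotropic Bernstein inequality.
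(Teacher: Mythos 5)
Your argument is correct and is the standard proof of this lemma: the paper itself does not prove it but imports it from \cite{CZ07, Pa02}, where the argument is precisely this reduction to a one-dimensional dilation computation for the vertical multiplier kernel followed by Young's inequality in $x_3$ with $x_{\rm h}$ frozen. Both the exponent bookkeeping ($1/r=1+1/q_1-1/q_2$, admissible exactly when $q_2\le q_1$) and the ring-support step (dividing by $(i\xi_3)^N$ away from the origin) are handled correctly, so nothing is missing.
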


Let us also recall the following two lemmas,
which will be used in the proof of Theorem \ref{thm4}.

\begin{lem}[Product law, see \cite{BCD}]\label{product law}
{\sl Let $s_1,s_2\in (-d/2,d/2)$ with $s_1+s_2>0$. Then we have
$$\|fg\|_{\dot{H}^{s_1+s_2-\f d2}(\R^d)}
\lesssim \|f\|_{\dH^{s_1}(\R^d)}\|g\|_{\dot{H}^{s_2}(\R^d)}.$$
While for the end-point case when $s_1=\f d2$, there holds for any $\e>0$ that
$$\|fg\|_{\dot{H}^{s_2}(\R^d)}\lesssim \|f\|_{\dH^{\f{d}{2}-\e}(\R^d)}^{\f12}
\|f\|_{\dH^{\f{d}{2}+\e}(\R^d)}^{\f12}\|g\|_{\dot{H}^{s_2}(\R^d)}.$$
}\end{lem}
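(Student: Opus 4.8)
The final statement to prove is the second (end-point) product law in Lemma~\ref{product law}: for $f,g$ on $\R^d$ and any $\e>0$,
\[
\|fg\|_{\dot H^{s_2}(\R^d)}\lesssim
\|f\|_{\dH^{\f d2-\e}}^{\f12}\|f\|_{\dH^{\f d2+\e}}^{\f12}\|g\|_{\dH^{s_2}},
\]
under the hypothesis $s_2\in(-d/2,d/2)$. The natural plan is to run the Bony paraproduct decomposition $fg=T_fg+T_gf+R(f,g)$, estimate each of the three pieces separately, and handle the endpoint loss (the failure of $\dot H^{d/2}\hookrightarrow L^\infty$) by the standard interpolation trick of splitting $f$ into low and high vertical/full frequencies and summing a geometric-type series with the extra $\e$-room.

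**The three paraproduct pieces.** I would first recall the Littlewood--Paley characterization $\|h\|_{\dot H^s}^2\sim\sum_j 2^{2js}\|\Delta_j h\|_{L^2}^2$. For the paraproduct $T_gf=\sum_j S_{j-1}g\,\Delta_j f$, the frequency of the product is localized in an annulus $2^j\cC$, so
\[
\|\Delta_k(T_gf)\|_{L^2}\lesssim\sum_{|j-k|\le N}\|S_{j-1}g\|_{L^\infty}\|\Delta_j f\|_{L^2}.
\]
The term $T_fg$ is symmetric. The main subtlety is that $\|S_{j-1}g\|_{L^\infty}$ (or $\|S_{j-1}f\|_{L^\infty}$) is \emph{not} controlled by $\dot H^{d/2}$ alone; here I would use Bernstein to write $\|S_{j-1}f\|_{L^\infty}\lesssim\sum_{j'\le j-1}2^{j'd/2}\|\Delta_{j'}f\|_{L^2}$ and split the sum at the threshold where $\Delta_{j'}f$ is measured in $\dH^{d/2-\e}$ versus $\dH^{d/2+\e}$. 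Concretely, for each fixed $k$,
\[
\sum_{j'\le k}2^{j'd/2}\|\Delta_{j'}f\|_{L^2}
\lesssim\Big(\sum_{j'\le k}2^{j'\e}\|\Delta_{j'}f\|_{\dot H^{d/2-\e}\text{-weight}}\Big),
\]
and optimizing the split point gives the geometric mean $\|f\|_{\dH^{d/2-\e}}^{1/2}\|f\|_{\dH^{d/2+\e}}^{1/2}$; this is exactly where the $\f12$--$\f12$ interpolation powers come from. The remainder $R(f,g)=\sum_{|j-j'|\le1}\Delta_j f\,\Delta_{j'}g$ requires $s_2>-d/2$ (or more precisely $s_2+d/2>0$) so that the low-frequency output sum converges; this is where the lower restriction $s_2>-d/2$ is consumed, while the upper restriction $s_2<d/2$ is what keeps the $T_gf$ piece summable.

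**Assembling and the main obstacle.** After bounding the three pieces I would recombine via the discrete Cauchy--Schwarz / Young inequality for the convolution-type sums in $j,k$, using that $2^{-N|j-k|}$ is summable, to land back in $\dot H^{s_2}$. The genuinely delicate point — and the one I expect to be the main obstacle — is the endpoint treatment of the $L^\infty$ norms: one cannot simply invoke $\dot H^{d/2}\hookrightarrow L^\infty$, and the bookkeeping of the $\e$-split must be done so that the resulting constant depends on $\e$ in a controlled way (it will blow up like $1/\e$ or $\e^{-1/2}$, which is acceptable since the statement allows the implicit constant to depend on $\e$). A secondary care point is keeping track of which of the restrictions on $s_2$ is used by which paraproduct term, so that the single hypothesis $s_2\in(-d/2,d/2)$ suffices for all three. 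Everything else (Bernstein, the annulus/ball frequency support bookkeeping, and the geometric summation) is routine once the splitting is set up.
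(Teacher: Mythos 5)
The paper does not prove this lemma at all: it is quoted verbatim from the reference \cite{BCD} (Bahouri--Chemin--Danchin), so there is no in-paper argument to compare against. Your sketch is precisely the standard proof from that source: Bony's decomposition $fg=T_fg+T_gf+R(f,g)$, the paraproduct bounded via $\|S_{j-1}\cdot\|_{L^\infty}$ and Bernstein, the remainder controlled using $s_1+s_2>0$ (which at the endpoint becomes $s_2>-d/2$), and the endpoint loss of $\dot H^{d/2}\hookrightarrow L^\infty$ repaired by splitting the low-frequency sum at an optimized threshold, which produces exactly the geometric mean $\|f\|_{\dH^{d/2-\e}}^{1/2}\|f\|_{\dH^{d/2+\e}}^{1/2}$ (equivalently, the refined embedding $\|f\|_{L^\infty}\lesssim_\e\|f\|_{\dH^{d/2-\e}}^{1/2}\|f\|_{\dH^{d/2+\e}}^{1/2}$). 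Your attribution of the two restrictions on $s_2$ to the correct pieces ($s_2<d/2$ for $T_gf$, $s_2>-d/2$ for $R$) is also right. Two minor remarks: you only address the endpoint assertion, whereas the statement also contains the non-endpoint law $\dH^{s_1}\cdot\dH^{s_2}\to\dH^{s_1+s_2-d/2}$ --- this follows from the same three-piece estimate with $\|S_{j-1}f\|_{L^\infty}\lesssim 2^{j(d/2-s_1)}\|f\|_{\dH^{s_1}}$ in place of the $\e$-splitting, so nothing new is needed, but it should be said; and the final assembly (the $\ell^2$ summation via Young's inequality in $j-k$) is asserted rather than executed, which is acceptable for a sketch since that step is routine.
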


\begin{lem}[Commutator estimate, see \cite{FMRR}]\label{communitor}
{\sl Given $s>d/2$, there is a constant $c=c(d,s)$ such that,
for all $u,B$ with $\nabla u, B \in H^s(\R^d)$,
$$\bigl\|\Lambda^s[(u\cdot \nabla)B]-(u\cdot\nabla)(\Lambda^s B)\bigr\|_{L^2(\R^d)}
\leq C\|\nabla u\|_{H^s(\R^d)}\|B\|_{H^s(\R^d)},$$
where $\Lam^s$ denotes fractional derivative operator defined as
$\cF(\Lam^s f)(\xi)=|\xi|^s\wh{f}(\xi).$
}\end{lem}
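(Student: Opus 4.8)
The plan is to prove the commutator estimate by Littlewood--Paley/Bony paraproduct analysis, reducing it first to a sum of scalar commutators and isolating the one genuinely delicate interaction, namely high frequencies of $u$ against low frequencies of $\na B$. Since $\pa_k$ commutes with the Fourier multiplier $\Lambda^s$, I would begin by writing
$$\Lambda^s[(u\cdot\na)B]-(u\cdot\na)(\Lambda^s B)=\sum_{k=1}^d[\Lambda^s,u_k]\pa_k B,$$
so that it suffices to bound each scalar commutator $[\Lambda^s,u_k]\pa_k B$ in $L^2(\R^d)$ by $C\|\na u\|_{H^s}\|B\|_{H^s}$.

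To exhibit the cancellation I would apply Bony's decomposition to both products that occur in the commutator. Writing $T_f g\eqdef\sum_j S_{j-1}f\,\Delta_j g$ and $R(f,g)\eqdef\sum_j\Delta_j f\,\wt\Delta_j g$, and subtracting the decomposition of $u_k\Lambda^s\pa_k B$ from that of $\Lambda^s(u_k\pa_k B)$, I obtain three pieces: a genuine commutator $[\Lambda^s,T_{u_k}]\pa_k B$ (where $u$ sits at low frequency), a paraproduct difference $\Lambda^s T_{\pa_k B}u_k-T_{\Lambda^s\pa_k B}u_k$ (where $u$ sits at high frequency), and a remainder difference $\Lambda^s R(u_k,\pa_k B)-R(u_k,\Lambda^s\pa_k B)$. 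The cancellation that really matters lives in the first piece; in the second piece the two terms turn out to be individually controllable, so no cancellation is needed there.

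For the first piece I would use the standard first--order paraproduct commutator estimate: on each dyadic block the symbol of $[\Lambda^s,S_{j-1}u_k]$ may be Taylor expanded, which gains one derivative off $u$, and the (isotropic form of the) Bernstein inequality in Lemma \ref{lemBern} then yields $\|[\Lambda^s,T_{u_k}]\pa_k B\|_{L^2}\lesssim\|\na u\|_{L^\infty}\|\pa_k B\|_{\dH^{s-1}}$; since $s>d/2$ gives $\|\na u\|_{L^\infty}\lesssim\|\na u\|_{H^s}$ and $\|\pa_k B\|_{\dH^{s-1}}\lesssim\|B\|_{H^s}$, this is acceptable. The remainder difference I would control by the standard high--high remainder estimate of Bony's calculus (for which the only constraint is that the two regularities sum to a positive number), distributing $s+1$ derivatives onto $u$ and the remaining $d/2$ onto $B$, so that it is dominated by $\|\na u\|_{\dH^{s}}\|B\|_{\dH^{d/2}}\lesssim\|\na u\|_{H^s}\|B\|_{H^s}$, using $s+d/2>0$ for convergence and $s>d/2$ for the embedding $\|B\|_{\dH^{d/2}}\lesssim\|B\|_{H^s}$.

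The hard part is the second piece, because one cannot afford to place $\na B$ in $L^\infty$: in the endpoint range $d/2<s\le d/2+1$ the function $\pa_k B\in\dH^{s-1}$ need not be bounded. Instead of an $L^\infty$ bound I would retain the full dyadic sum and estimate the low--frequency factor by Bernstein as $\|S_{j-1}\pa_k B\|_{L^\infty}\lesssim 2^{(d/2+1-s)j}\|B\|_{\dH^s}$, the geometric series converging precisely because $s<d/2+1$ (for $s\ge d/2+1$ one simply uses $\na B\in L^\infty$). Pairing this with $\|\Lambda^s\Delta_j u_k\|_{L^2}\sim 2^{sj}\|\Delta_j u\|_{L^2}$ and summing by almost--orthogonality produces the weight $2^{(d/2+1)j}\|\Delta_j u\|_{L^2}$, whose $\ell^2$ norm is dominated by $\|\na u\|_{H^s}$ exactly when $s\ge d/2$ — the high blocks being controlled by $\|\na u\|_{\dH^s}$ and the low blocks by $\|\na u\|_{L^2}$. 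Thus the decisive calculation, and the only place where the threshold $s>d/2$ is genuinely used, is this summation in the high--$u$/low--$\na B$ regime; once it is in hand, collecting the three bounds and summing over $k$ completes the proof.
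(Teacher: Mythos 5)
The paper contains no proof of this lemma: it is imported directly from \cite{FMRR}, so there is no internal argument to compare against. Your paradifferential proof is correct and is essentially the standard Littlewood--Paley proof of this commutator bound, in the same spirit as the cited source: the reduction to $\sum_k[\Lambda^s,u_k]\pa_k B$, the Bony splitting into the paraproduct commutator $[\Lambda^s,T_{u_k}]\pa_k B$, the paraproduct difference with $u$ at high frequency, and the remainder difference are all treated with the right tools, and your dyadic summation in the high-$u$/low-$\na B$ regime --- producing the weight $2^{(d/2+1)j}\|\Delta_j u\|_{L^2}$, whose $\ell^2$ norm is $\|\na u\|_{\dH^{d/2}}\lesssim\|\na u\|_{H^s}$ --- is exactly the step that lets you avoid placing $\na B$ in $L^\infty$, which is the entire point of the lemma as opposed to the classical Kato--Ponce bound.

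Two small caveats. First, your dichotomy leaves the single borderline value $s=d/2+1$ uncovered: there $\na B\in H^{d/2}\not\hookrightarrow L^\infty$, and your geometric series has exponent zero, degenerating to a logarithmic divergence. The fix is immediate: bound $\|S_{j-1}\na B\|_{L^\infty}\lesssim 2^{\e j}\wt{c}_j\|B\|_{H^s}$ for arbitrarily small $\e>0$ and absorb the loss into the strict gap $s>d/2$ on the $\na u$ side; note also that in this paper the lemma is only ever applied with $d=2$ and $s=s_1-1\in(1,2)$, so only your sub-borderline branch $s<d/2+1$ is used. Second, your closing claim that the threshold $s>d/2$ is genuinely used only in that summation is a slight overstatement: your estimate for $[\Lambda^s,T_{u_k}]\pa_k B$ invokes the Sobolev embedding $\|\na u\|_{L^\infty}\lesssim\|\na u\|_{H^s}$, which also requires the strict inequality $s>d/2$. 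Neither point affects the correctness of the argument.
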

\smallskip

The following anisotropic Sobolev-type inequality
will play an essential role in this paper.

\begin{lem}\label{lem2.4}
{\sl Let $f$ be a function defined on $\R^2$, then we have
\begin{equation}\label{L42}
\|f\|_{L^4(\R^2)}\leq C\|f\|^{\f12}_{L^2(\R^2)}\|\p_1f\|^{\f14}_{L^2(\R^2)} \|\p_2f\|^{\f14}_{L^2(\R^2)},
\end{equation}
and generally for any $2\leq p<\infty$ that
\begin{equation}\label{Lp2}
\|f\|_{L^p(\R^2)}\leq C\|f\|^{\f2p}_{L^2(\R^2)}\|\p_1f\|^{\f12-\f1p}_{L^2(\R^2)} \|\p_2f\|^{\f12-\f1p}_{L^2(\R^2)}.
\end{equation}
Similarly, for the function defined on $\R^3$, there holds
\begin{equation}\label{L43}
 \|g\|_{L^4(\R^3)}\leq C\|g\|^{\f14}_{L^2(\R^3)}\|\p_1g\|^{\f14}_{L^2(\R^3)} \|\p_2g\|^{\f14}_{L^2(\R^3)}\|\p_3g\|^{\f14}_{L^2(\R^3)}.
\end{equation}
}\end{lem}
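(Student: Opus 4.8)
The plan is to prove all three inequalities by one mechanism: bound a function by the product of its one-dimensional directional suprema, each supremum estimated through the fundamental theorem of calculus. I would argue first for $f\in\mathcal S$ and pass to the general case by density, and replace $f$ by $|f|$, using $|\p_i|f||\le|\p_i f|$ a.e., so all quantities below may be taken nonnegative. For \eqref{L42}, set $g=|f|^2$. For fixed $x_2$ the identity $g(x_1,x_2)=\int_{-\infty}^{x_1}\p_1 g\,dy_1$ gives $\sup_{x_1}g(x_1,x_2)\le\int_\R|\p_1 g|\,dy_1=2\int_\R|f||\p_1 f|\,dy_1=:A(x_2)$, and symmetrically $\sup_{x_2}g(x_1,x_2)\le 2\int_\R|f||\p_2 f|\,dy_2=:B(x_1)$. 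Hence $|f|^4=g^2\le A(x_2)B(x_1)$, Tonelli's theorem separates the double integral as $\int_{\R^2}|f|^4\le\bigl(\int A\,dx_2\bigr)\bigl(\int B\,dx_1\bigr)$, and two applications of Cauchy--Schwarz bound each factor by $2\|f\|_{L^2}\|\p_i f\|_{L^2}$, whence $\|f\|_{L^4}^4\le 4\|f\|_{L^2}^2\|\p_1 f\|_{L^2}\|\p_2 f\|_{L^2}$, which is exactly \eqref{L42}.

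Since \eqref{L42} is the case $p=4$ of \eqref{Lp2}, for general $p$ I would run the same slicing on $g=|f|^{p/2}$ to obtain a recursion in $p$. The two steps above give $\int_{\R^2}|f|^p=\int g^2\le\|\p_1 g\|_{L^1}\|\p_2 g\|_{L^1}$, and since $|\p_i g|=\tfrac p2|f|^{p/2-1}|\p_i f|$, Cauchy--Schwarz yields $\|\p_i g\|_{L^1}\le\tfrac p2\|f\|_{L^{p-2}}^{(p-2)/2}\|\p_i f\|_{L^2}$. Writing $I_q=\|f\|_{L^q}^q$ this reads $I_p\le\tfrac{p^2}4\,I_{p-2}\,\|\p_1 f\|_{L^2}\|\p_2 f\|_{L^2}$. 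Starting from $I_2=\|f\|_{L^2}^2$ and iterating, every even integer $p=2k$ satisfies $I_p\le C_k\|f\|_{L^2}^2\bigl(\|\p_1 f\|_{L^2}\|\p_2 f\|_{L^2}\bigr)^{p/2-1}$, i.e. \eqref{Lp2}. For non-even $p\in(2,\infty)$ I would interpolate \eqref{Lp2} between the two neighbouring even integers; because the three exponents in \eqref{Lp2} are uniquely pinned down by the isotropic and the two anisotropic (direction-by-direction) scalings, Hölder interpolation automatically reproduces the weights $2/p$ and $\tfrac12-\tfrac1p$, giving \eqref{Lp2} for all $p\in[2,\infty)$.

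Finally, for the three-dimensional inequality \eqref{L43} I would slice in the horizontal plane. Applying the squared form of \eqref{L42} to $g(\cdot,\cdot,x_3)$ for each fixed $x_3$ and integrating in $x_3$ gives $\|g\|_{L^4(\R^3)}^4\le C\int_\R\Phi(x_3)^2\Psi_1(x_3)\Psi_2(x_3)\,dx_3$, where $\Phi(x_3)=\|g(\cdot,\cdot,x_3)\|_{L^2_\h}$ and $\Psi_i(x_3)=\|\p_i g(\cdot,\cdot,x_3)\|_{L^2_\h}$. I would bound $\int\Phi^2\Psi_1\Psi_2\,dx_3\le(\sup_{x_3}\Phi^2)\bigl(\int\Psi_1^2\bigr)^{1/2}\bigl(\int\Psi_2^2\bigr)^{1/2}$ by Cauchy--Schwarz, note $\int\Psi_i^2=\|\p_i g\|_{L^2(\R^3)}^2$, and generate the missing vertical derivative through the one-dimensional bound $\sup_{x_3}\Phi^2\le\int_\R|\p_3(\Phi^2)|\,dx_3=2\int_{\R^3}|g||\p_3 g|\le 2\|g\|_{L^2(\R^3)}\|\p_3 g\|_{L^2(\R^3)}$. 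Collecting these estimates and taking the fourth root yields \eqref{L43}.

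The computations are elementary once the slicing mechanism is in place; the only points requiring care are the justification of the one-dimensional fundamental-theorem-of-calculus bounds (which need decay at infinity, handled by the density reduction) and the chain-rule manipulation of $|f|^{p/2}$. The genuinely delicate bookkeeping is to make the derivative weights come out exactly $\tfrac12-\tfrac1p$ for \emph{every} $p$, and this is precisely what forces the two-step route of proving \eqref{Lp2} first at even integers and then interpolating.
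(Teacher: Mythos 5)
Your proof is correct, but it takes a genuinely different route from the paper. You use the classical Ladyzhenskaya-type physical-space argument: write $|f|^2$ (resp.\ $|f|^{p/2}$) as an integral of its directional derivative along each coordinate, bound the function pointwise by the product of the two resulting one-variable suprema, separate the integrals by Tonelli, and close with Cauchy--Schwarz; the general exponent $p$ is then reached by a recursion over even integers followed by H\"older interpolation, and \eqref{L43} by slicing in $x_3$ and generating the vertical factor from a one-dimensional FTC bound on $\|g(\cdot,\cdot,x_3)\|_{L^2_{\rm h}}^2$. The paper instead works entirely on the frequency side: it decomposes $f=\sum_{k,\ell}\Delta^1_k\Delta^2_\ell f$ with a separate dyadic decomposition in each variable, applies the anisotropic Bernstein inequality of Lemma \ref{lemBern} on each of the four regions determined by thresholds $N_1,N_2$, and then optimizes $2^{N_i}\sim\|\p_if\|_{L^2}/\|f\|_{L^2}$; the same scheme covers \eqref{Lp2} and \eqref{L43} directly by changing the Bernstein exponents, with no recursion or interpolation step. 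Your approach is more elementary (no Littlewood--Paley machinery) and yields explicit constants such as $\|f\|_{L^4}^4\le 4\|f\|_{L^2}^2\|\p_1f\|_{L^2}\|\p_2f\|_{L^2}$, at the price of the two-stage even-integer/interpolation detour for general $p$ and of the chain-rule/density justifications you rightly flag; the paper's frequency-space proof is uniform in structure across all three estimates and fits the anisotropic Besov framework used in the rest of Section \ref{Sect2}, which is presumably why the authors chose it. Both arguments deliver the stated inequalities with constants depending only on $p$, which is all that is used later.
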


\begin{proof}
Let us first prove \eqref{L42}.
For any integers $N_1,~N_2$, by using Lemma \ref{lemBern}, we have
 \begin{equation}\begin{split}\label{LL4}
  \|f\|_{L^4(\R^2)}&\leq \sum_{k,\ell}\|\Delta_k^1 \Delta_{\ell}^2f\|_{L^4(\R^2)}\\
 &\lesssim \Big(\sum_{k\leq N_1,\ell\leq N_2}+\sum_{k\leq N_1,\ell> N_2}
 +\sum_{k> N_1,\ell\leq N_2}+\sum_{k> N_1,\ell> N_2} \Big)
 2^{\f{k+\ell}{4}}\|\Delta_k^1 \Delta_{\ell}^2f\|_{L^2(\R^2)},
\end{split}\end{equation}
where $\Delta_k^i f\eqdef\cF^{-1}\bigl(\varphi(2^{-k}|\xi_i|)\widehat{f}\,\bigr)$.
For these terms on the right-hand side, we first get
 $$\sum_{k\leq N_1,\ell\leq N_2}
 2^{\f{k+\ell}{4}}\|\Delta_k^1 \Delta_{\ell}^2f\|_{L^2(\R^2)}
 \lesssim\sum_{k\leq N_1,\ell\leq N_2}
 2^{\f{k+\ell}{4}}\|f\|_{L^2(\R^2)}
 \lesssim 2^{\frac{N_1+N_2}{4}}\|f\|_{L^2(\R^2)}.$$
 While it follows from Lemma \ref{lemBern} that
$$\sum_{k\leq N_1,\ell> N_2}2^{\f{k+\ell}{4}}\|\Delta_k^1 \Delta_{\ell}^2f\|_{L^2(\R^2)}
\lesssim \sum_{k\leq N_1,\ell> N_2}2^{\frac{k-3\ell}{4}}
\|\Delta_k^1 \Delta_{\ell}^2\p_2f\|_{L^2(\R^2)}
\lesssim 2^{\frac{N_1-3N_2}{4}}\|\p_2f\|_{L^2(\R^2)},$$
and
$$\sum_{k> N_1,\ell\leq N_2}2^{\f{k+\ell}{4}}\|\Delta_k^1 \Delta_{\ell}^2f\|_{L^2(\R^2)}
\lesssim \sum_{k> N_1,\ell\leq N_2}2^{\frac{\ell-3k}{4}}
\|\Delta_k^1 \Delta_{\ell}^2\p_1f\|_{L^2(\R^2)}
\lesssim 2^{\frac{N_2-3N_1}{4}}\|\p_1f\|_{L^2(\R^2)}.$$
Similarly, we can use Lemma \ref{lemBern} again to obtain
\begin{align*}
\sum_{k> N_1,\ell> N_2}\|\Delta_k^1 \Delta_{\ell}^2f\|_{L^2(\R^2)}
&\lesssim \sum_{k> N_1,\ell> N_2}2^{-\frac{k+\ell}{4}}
\|\Delta_k^1 \Delta_{\ell}^2\p_1f\|^{\f12}_{L^2(\R^2)}
\|\Delta_k^1 \Delta_{\ell}^2\p_2f\|^{\f12}_{L^2(\R^2)}\\
&\lesssim 2^{-\frac{N_1+N_2}{4}}\|\p_1f\|^{\f12}_{L^2(\R^2)}\|\p_1f\|^{\f12}_{L^2(\R^2)}.
\end{align*}

Now by substituting the above four estimates into \eqref{LL4},
and taking $N_1,~N_2$ so that
 $$ 2^{N_1} \sim \frac{\|\p_1f\|_{L^2(\R^2)}}{\|f\|_{L^2(\R^2)}},\andf 2^{N_2}
 \sim \frac{\|\p_2f\|_{L^2(\R^2)}}{\|f\|_{L^2(\R^2)}},$$
leads to the first desired estimate \eqref{L42}.

The other two estimates \eqref{Lp2}
and \eqref{L43} can be proved exactly along the same line.
\end{proof}

In the sequel, we shall frequently use Bony's decomposition
from \cite{Bony} in the vertical direction:
\begin{equation}\label{Bonyv}
fg=\sum_{k\in\Z}\Dvk f\cdot\Svk g+\sum_{k\in\Z}\Skk f\cdot\Dvk g.
\end{equation}

With the help of this decomposition, we shall prove the following Lemmas
\ref{lem2.5} and \ref{lem2.6}, which are the core of the proof of Theorem \ref{thm3}.

\begin{lem}\label{lem2.5}
{\sl Let $a,~b,~c$ be regular enough.
Then for any $\alpha\in[0,\f12]$,
there exists some $(d_{\ell})_{\ell\in\Z}$ on the unit sphere of $\ell^1$
such that for every $\ell\in\Z$, there holds
\begin{equation}\label{lem2.5h}
\Bigl|\int_0^t\bigl(\Dvl(a\otimes b)\,,\,\Dvl c\bigr)\,dt'\Bigr|
\lesssim t^{\alpha}2^{-\ell}d^2_{\ell}\|a\|^{2\alpha}_{\LB}
\|a\|^{1-2\alpha}_{\AB}\|b\|_{X(t)}\|c\|_{X(t)},
\end{equation}
where $\|f\|_{X(t)}\eqdef\|f\|^{\f12}_{\LB}
\|\p_1f\|^{\f14}_{\AB}\|\p_2f\|^{\f14}_{\AB}$.
If in addition $\dive a =0$, then we have
 \begin{equation}\begin{split}\label{lem2.5v}
     \Bigl|\int_0^t\bigl(\Dvl(a^{3}\cdot \p_{3}b)\,,\,\Dvl b\bigr)\,dt'\Bigr|
     &\lesssim t^{\alpha}2^{-\ell}d^2_{\ell}\|\diveh a^\h\|^{2\alpha}_{\LB}
     \|\diveh a^\h\|^{1-2\alpha}_{\AB}\|b\|_{X(t)}^2.
     \end{split}\end{equation}
}\end{lem}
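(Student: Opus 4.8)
The plan is to reduce both inequalities, via Bony's vertical decomposition \eqref{Bonyv}, to dyadic trilinear estimates that are then closed by three tools used in concert: the vertical Bernstein inequality of Lemma~\ref{lemBern}, the two-dimensional anisotropic Sobolev inequality \eqref{L42} applied slicewise in the horizontal variables, and Hölder's inequality in the space variables --- with horizontal Lebesgue exponents $(2,4,4)$ and conjugate vertical exponents --- and in time. The guiding principle is that $a$ (or, for \eqref{lem2.5v}, the quantity $\diveh a^\h$) never carries a horizontal derivative and is therefore placed in a pure vertical $L^2$ or $L^\infty$ norm, while the two factors built from $b$ (and $c$) are put in $L^4_\h$, which is precisely what \eqref{L42} converts into the three pieces constituting $\|\cdot\|_{X(t)}$.

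To prove \eqref{lem2.5h} I would expand $a\otimes b=\sum_k\Dvk a\,\Svk b+\sum_k\Skk a\,\Dvk b$ by \eqref{Bonyv}. In the first (para)product the vertical spectrum forces $|k-\ell|\leq N_0$, and a representative block obeys
\begin{equation*}
\bigl|\bigl(\Dvl(\Dvk a\,\Svk b),\Dvl c\bigr)\bigr|
\lesssim\|\Dvk a\|_{L^2}\,\|\Svk b\|_{L^4_\h(L^\infty_\v)}\,\|\Dvl c\|_{L^4_\h(L^2_\v)},
\end{equation*}
after which Lemma~\ref{lemBern} and \eqref{L42} turn the $b$- and $c$-factors into the $X$-type products and extract one factor $2^{-\ell/2}$ from the $\ell$-localized block $\Dvk a$ and another from $\Dvl c$. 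For the second sum (the remainder piece) the constraint is $k\gtrsim\ell$, and the two factors $2^{-\ell/2}$ are obtained from $\Dvl c$ and from the high-frequency block $\Dvk b$ after summing the tail $\sum_{k\gtrsim\ell}2^{-k/2}(\cdot)$, which converges by Young's inequality precisely because the regularity index $\f12$ is strictly positive. In both cases the product of the two $2^{-\ell/2}$'s gives the gain $2^{-\ell}$, the two $\ell^1$-normalized sequences combine into $d_\ell^2$, and a Hölder argument in $t'$ --- interpolating $\|a\|^{2\alpha}_{\LB}\|a\|^{1-2\alpha}_{\AB}$ against the $L^\infty_t$/$L^2_t$ norms hidden in $\|b\|_{X(t)}\|c\|_{X(t)}$ --- produces the surplus power $t^\alpha$ for every $\alpha\in[0,\f12]$.

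For \eqref{lem2.5v} the essential new input is the divergence-free condition $\pa_3 a^3=-\diveh a^\h$. I would split
\begin{equation*}
\bigl(\Dvl(a^3\pa_3 b),\Dvl b\bigr)
=\bigl(a^3\,\pa_3\Dvl b,\Dvl b\bigr)+\bigl([\Dvl,a^3]\pa_3 b,\Dvl b\bigr).
\end{equation*}
In the first term an integration by parts in $x_3$ gives $\f12\int(\diveh a^\h)\,|\Dvl b|^2\,dx$; using $\BB\hookrightarrow L^2_\h(L^\infty_\v)$ for $\diveh a^\h$, putting the two copies of $\Dvl b$ in $L^4_\h(L^2_\v)$, and extracting $2^{-\ell/2}$ from each of them reproduces exactly the bound \eqref{lem2.5v} by the same scheme as above with $a,b,c$ replaced by $\diveh a^\h,b,b$. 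The commutator term is expanded again by \eqref{Bonyv}; the standard first-order commutator estimate yields the gain $2^{-\ell}$ and, through $\pa_3 a^3=-\diveh a^\h$, once more turns the $a^3$-dependence into a $\diveh a^\h$-dependence, after which Bernstein, \eqref{L42} and time-Hölder close the estimate as before.

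I expect the commutator term in \eqref{lem2.5v} to be the principal obstacle: one must check that the vertical commutator $[\Dvl,a^3]\pa_3 b$ genuinely gains the full factor $2^{-\ell}$ while simultaneously trading $a^3$ for $\pa_3 a^3=-\diveh a^\h$, so that $\diveh a^\h$ (and not $a^3$) appears in the final bound with the correct interpolation powers $2\alpha$ and $1-2\alpha$; the non-resonant and high-high pieces of the commutator, where no derivative is spared automatically, require invoking the vertical Bernstein inequality on each block in the form $\Dvk a^3\sim 2^{-k}\Dvk\diveh a^\h$ to recover both the missing derivative and the decay. The only other point demanding care, already visible in \eqref{lem2.5h}, is the tail summation $\sum_{k\gtrsim\ell}$ in the remainder term, which hinges on the strict positivity of the index $\f12$ and on distributing the weight $2^{k/2}$ across the three factors so as to rebuild the norms $\|\cdot\|_{X(t)}$.
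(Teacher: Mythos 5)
Your proposal is correct and follows essentially the same route as the paper: Bony's vertical decomposition \eqref{Bonyv}, H\"older with horizontal exponents $(2,4,4)$, the anisotropic inequality \eqref{L42} to build the $X(t)$-norms, Lemma \ref{lemBern} for the dyadic gains, and a time-H\"older interpolation producing $t^\alpha$; for \eqref{lem2.5v} you likewise rely on integration by parts via $\pa_3a^3=-\diveh a^\h$, a Taylor-type commutator gain of $2^{-\ell}$, and the reverse Bernstein bound $\|\Dvk a^3\|\lesssim 2^{-k}\|\Dvk\diveh a^\h\|$ for the remainder. The only differences are organizational (the paper splits $\Dvl(a^3\pa_3 b)$ into four pieces $I_1,\dots,I_4$ with the low-frequency cut-off $\Svl a^3$ in the transport term, whereas you commute the full $a^3$, which merges the paper's $I_2$ and $I_3$), and these do not affect the argument.
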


\begin{proof}
We first get, by using Bony's decomposition \eqref{Bonyv} that
\begin{align*}
\bigl|\bigl(\Dvl(a\otimes b),\Dvl c\bigr)\bigr|
=\Bigl|\sum_{|k-l|\leq5}\bigl(\Dvl(\Dvk a\otimes\Svk b),\Dvl c\bigr)
+\sum_{k\geq l-4}\bigl(\Dvl(\Skk a\otimes\Dvk b),\Dvl c\bigr)\Bigr|.
\end{align*}
In view of the estimate \eqref{L42}, there holds
\begin{align*}
\Bigl|\sum_{|k-l|\leq5}&\bigl(\Dvl(\Dvk a\otimes\Svk b)\,,\,\Dvl c\bigr)\Bigr|
\lesssim \sum_{|k-l|\leq5}\|\Dvk a\|_{L^2}
\|\Svk b\|_{L_{\v}^{\infty}L_{\h}^4}\|\Dvl c\|_{L_{\v}^{2}L_{\h}^4}\\
&\lesssim\sum_{|k-l|\leq5}\|\Dvk a\|_{L^2}
\|b\|^{\f12}_{L_{\v}^{\infty}L_{\h}^2}
\|\p_1b\|^{\f14}_{L_{\v}^{\infty}L_{\h}^2}
\|\p_2b\|^{\f14}_{L_{\v}^{\infty}L_{\h}^2}
\|\Dvl c\|^{\f12}_{L^{2}}
\|\Dvl\p_1 c\|^{\f14}_{L^{2}}\|\Dvl\p_2 c\|^{\f14}_{L^{2}},
\end{align*}
and
\begin{align*}
\Bigl|\sum_{k\geq l-4}&\bigl(\Dvl(\Skk a\otimes\Dvk b)\,,\,\Dvl c\bigr)\Bigr|
\lesssim \sum_{k\geq l-4}\|\Skk a\|_{L_{\v}^{\infty}L_{\h}^2}
\|\Dvk b\|_{L_{\v}^2L_{\h}^4}\|\Dvl c\|_{L_{\v}^{2}L_{\h}^4}\\
&\lesssim\sum_{k\geq l-4}\|a\|_{L_{\v}^{\infty}L_{\h}^2}
\|\Dvk b\|^{\f12}_{L^2}\|\Dvk \p_1b\|^{\f14}_{L^2}\|\Dvk \p_2b\|^{\f14}_{L^2}
\|\Dvl c\|^{\f12}_{L^{2}}\|\Dvl\p_1 c\|^{\f14}_{L^{2}}\|\Dvl\p_2 c\|^{\f14}_{L^{2}}.
\end{align*}
As a result, we can achieve
\begin{equation}\begin{split}\label{2.10}
\Bigl|\int_0^t\bigl(\Dvl& (a \otimes b)\,,\,\Dvl c\bigr)\,dt'\Bigr|
     \lesssim t^{\alpha}\sum_{|k-l|\leq5}
     \|\Dvk a\|^{2\alpha}_{L^{\infty}_t(L^2)}
     \|\Dvk a\|^{1-2\alpha}_{L^2_t(L^2)}
     \|b\|^{\f12}_{L_t^{\infty}(L_{\v}^{\infty}L_{\h}^2)}\\
     &\times\|\p_1b\|^{\f14}_{L^2_t(L_{\v}^{\infty}L_{\h}^2)}
     \|\p_2b\|^{\f14}_{L_t^{2}(L_{\v}^{\infty}L_{\h}^2)}
     \|\Dvl c\|^{\f12}_{L_t^{\infty}(L^{2})}\|\Dvl\p_1 c\|^{\f14}_{L^2_t(L^{2})}
     \|\Dvl\p_2 c\|^{\f14}_{L_t^{2}(L^{2})}\\
     &+t^{\alpha}\sum_{k\geq l-4}\|a\|^{2\alpha}_{L^{\infty}_t(L_{\v}^{\infty}L_{\h}^2)}
     \|a\|^{1-2\alpha}_{L^2_t(L_{\v}^{\infty}L_{\h}^2)}
     \|\Dvk b\|^{\f12}_{L_t^{\infty}(L^2)}\\
     &\times\|\Dvk \p_1b\|^{\f14}_{L^2_t(L^2)}\|\Dvk \p_2b\|^{\f14}_{L_t^{2}(L^2)}
     \|\Dvl c\|^{\f12}_{L_t^{\infty}(L^{2})}\|\Dvl\p_1 c\|^{\f14}_{L^2_t(L^{2})}
     \|\Dvl\p_2 c\|^{\f14}_{L_t^{2}(L^{2})}.
\end{split}\end{equation}

Notice for any function $f$ and any $p\in[1,\infty]$, there holds
\begin{equation}\label{LLL}
\|f\|_{L^p_t(L_{\v}^{\infty}L_{\h}^2)}
\lesssim \|f\|_{L^p_t(B^{0,\f12})}\lesssim \|f\|_{\wt{L}^p_t(B^{0,\f12})},\andf
\|\Dvl f\|_{L_t^p(L^{2})}\lesssim2^{-\f12\ell}d_\ell\|f\|_{\wt L^p_t(B^{0\,\f12})}.
\end{equation}
By using \eqref{LLL}, then we can deduce from \eqref{2.10} that
\begin{align*}
\int_0^t\bigl(\Dvl(a \otimes b)\,,\,\Dvl c\bigr)\,dt'
\lesssim& t^{\alpha}\|a\|^{2\alpha}_{\LB}
\|a\|^{1-2\alpha}_{\AB}\|b\|_{X(t)}\|c\|_{X(t)}\\
&\times2^{-\f12\ell}d_\ell\Bigl(\sum_{|k-l|\leq5}2^{-\f12k}d_k
+\sum_{k\geq l-4}2^{-\f12k}d_k\Bigr)\\
\lesssim& t^{\alpha}2^{-\ell}d^2_{\ell}\|a\|^{2\alpha}_{\LB}
\|a\|^{1-2\alpha}_{\AB}\|b\|_{X(t)}\|c\|_{X(t)},
\end{align*}
which is exactly the first desired estimate \eqref{lem2.5h}.
\smallskip

Next, let us turn to the proof of \eqref{lem2.5v}.
Due to the support property of the symbols of the dyadic operators,
we can write
\begin{align*}
\dvl(a^3\cdot \p_3b)
&=\sum_{|k-\ell|\leq5}\dvl(\Svk a^3\Dvk \p_3b)
+\sum_{k\geq\ell-4}\dvl(\Dvk a^3 \Skk \p_3b)\\
&=\sum_{|k-\ell|\leq5}\Bigl([\Dvl,\Svk a^3]\Dvk \p_3b
+\Svk a^3\Dvl\Dvk \p_3b\Bigr)
+\sum_{k\geq\ell-4}\dvl(\Dvk a^3 \Skk \p_3b)\\
&=\sum_{|k-\ell|\leq5}[\Dvl,\Svk a^3]\Dvk \p_3b
+\sum_{|k-\ell|\leq5}\Bigl((\Svk a^3-\Svl a^3)\Dvl\Dvk \p_3b\Bigr)\\
&\quad+\Svl a^3\Dvl\p_3b
+\sum_{k\geq\ell-4}\dvl(\Dvk a^3 \Skk \p_3b)\eqdef I_1+I_2+I_3+I_4.
\end{align*}
In what follows, we shall handle the above term by term.

For the commutator term, we first use Taylor's formula to get
\begin{align*}
I_1(x)
=\sum_{|k-\ell|\leq5}2^{\ell}\Bigl(\int_{\R}&
h(2^{\ell}(x_3-y_3))\cdot(y_3-x_3)
\int_0^1\Svk \p_3a^3(x_\h,x_3+\tau(y_3-x_3))\,d\tau\\
& \times \Dvl\Dvk \p_3b(x_\h,y_3))\,dy_3\Bigr)(x),
\end{align*}
where $h(x_3)=\mathcal{F}^{-1}(\varphi(|\xi_3|))(x_3)$ is Schwartz.
Then in view of the estimates \eqref{L42}
and \eqref{LLL}, as well as the divergence-free condition
$\dive a=0$, we deduce
 \begin{equation}\begin{split}\label{estiI1}
\Bigl|\int_0^t\bigl(I_1\,,\,\Dvl b\bigr)\,dt'\Bigr|
&\lesssim t^{\alpha} \sum_{|k-l|\leq5}
\|\Svk \p_3a^3\|^{2\alpha}_{L^{\infty}_t(L^{\infty}_\v L^2_\h)}
\|\Svk \p_3a^3\|^{1-2\alpha}_{L^2_t(L^{\infty}_\v L^2_\h)}\\
&\quad\times\|\Dvl b\|_{L^{\infty}_t(L^2)}
\|\Dvl \p_1 b\|^{\f12}_{L^{2}_t(L^2)}
\|\Dvl \p_2 b\|^{\f12}_{L^{2}_t(L^2)}\\
&\lesssim t^{\alpha}2^{-\ell}d^2_{\ell}\|\diveh a^\h\|^{2\alpha}_{\LB}
\|\diveh a^\h\|^{1-2\alpha}_{\AB}\|b\|_{X(t)}^2.
\end{split}\end{equation}

Next, in view of the support property, we have
for any $k$ with $|k-l|\leq5$ that
$$\|\Svk a^3-\Svl a^3\|_{L^{\infty}_\v L^2_\h}
\lesssim 2^{-\f12\ell}\sum_{|\alpha|\leq5}\|\dv_{\ell+\alpha}\pa_3a^3\|_{L^2}
\lesssim 2^{-\ell}d_{\ell}\|\diveh a^{\h}\|_{B^{0,\f12}},$$
which together with Lemma \ref{lemBern} and the estimates \eqref{L42}
and \eqref{LLL} gives
\begin{equation}\begin{split}
\Bigl|\int_0^t\bigl(I_2\,,\,\Dvl b\bigr)\,dt'\Bigr|
&\lesssim t^{\alpha}
\sum_{|k-l|\leq5}\|\Svk a^3-\Svl a^3\|^{2\alpha}_{L^{\infty}_t(L^{\infty}_\v L^2_\h)}
\|\Svk a^3-\Svl a^3\|^{1-2\alpha}_{L^2_t(L^{\infty}_\v L^2_\h)}\\
&\quad\times 2^{\ell}\|\Dvl b\|_{L^{\infty}_t(L^2)}
\|\Dvl \p_1 b\|^{\f12}_{L^{2}_t(L^2)}\|\Dvl \p_2 b\|^{\f12}_{L^{2}_t(L^2)}\\
&\lesssim t^{\alpha}2^{-\ell}d^2_{\ell}\|\diveh a^\h\|^{2\alpha}_{\LB}
\|\diveh a^\h\|^{1-2\alpha}_{\AB}\| b\|_{X(t)}^2.
\end{split}\end{equation}

For the third term, we can use integration by parts to obtain
$$\bigl(I_3\,,\,\Dvl b\bigr)=-\f12\int_{\R^3}\Svl \pa_3 a^3\cdot |\Dvl b|^2\,dx
=\f12\int_{\R^3}\Svl \diveh a^\h\cdot |\Dvl b|^2\,dx,$$
which leads to
\begin{equation}\begin{split}
\Bigl|\int_0^t\bigl(I_3\,,\,\Dvl b\bigr)\,dt'\Bigr|
&\lesssim t^{\alpha}\|\Svl \diveh a^\h\|^{2\alpha}_{L^{\infty}_t(L^{\infty}_{\v}L^2_{\h})}
\|\Svl \diveh a^\h\|^{1-2\alpha}_{L^2_t(L^{\infty}_{\v}L^2_{\h})}\\
&\quad \times\|\Dvl b\|_{L^{\infty}_t(L^2)}\|\Dvl\p_1 b\|^{\f12}_{L^{2}_t(L^2)}\|\Dvl\p_2 b\|^{\f12}_{L^{2}_t(L^2)}\\
&\lesssim t^{\alpha}2^{-\ell}d^2_{\ell}\|\diveh a^\h\|^{2\alpha}_{\LB}
\|\diveh a^\h\|^{1-2\alpha}_{\AB}\|b\|_{X(t)}^2.
\end{split}\end{equation}

Finally by using Lemma \ref{lemBern},
the estimates \eqref{L42} and \eqref{LLL} again, we achieve
\begin{equation}\begin{split}\label{estiI4}
\Bigl|\int_0^t\bigl(I_4\,,\,\Dvl b\bigr)\,dt'\Bigr|
&\lesssim t^{\alpha}\sum_{k\geq l-4}2^{-k}\|\Dvk\pa_3 a^3\|^{2\alpha}_{L^{\infty}_t(L^2)}
\|\Dvk\pa_3 a^3\|^{1-2\alpha}_{L^2_t(L^2)}\\
&\quad \times2^k\|\Skk b\|^{\f12}_{L^{\infty}_t(L^{\infty}_\v L^2_\h)}
\|\Skk\p_1 b\|^{\f14}_{L^2_t(L^{\infty}_\v L^2_\h)}
\|\Skk\p_2 b\|^{\f14}_{L^{\infty}_t(L^{2}_\v L^2_\h)}\\
&\quad \times\|\Dvl b\|^{\f12}_{L^{\infty}_t(L^2)}\|\Dvl \p_1 b\|^{\f14}_{L^2_t(L^2)}
\|\Dvl \p_2 b\|^{\f14}_{L^{2}_t(L^2)}\\
&\lesssim t^{\alpha}2^{-\ell}d^2_{\ell}\|\diveh a^\h\|^{2\alpha}_{\LB}
\|\diveh a^\h\|^{1-2\alpha}_{\AB}\|b\|_{X(t)}^2.
\end{split}\end{equation}

Now combining the estimates \eqref{estiI1}-\eqref{estiI4} leads to
the scond desired estimate \eqref{lem2.5v}.
\end{proof}

\begin{rmk}\label{rmk2.2}
With a small modification to the above proof, namely by changing the index
for time integral, we can obtain a series of similar estimates, such as
\begin{align*}
\Bigl|\int_0^t\bigl(\Dvl&(a\otimes b)\,,\,\Dvl c\bigr)\,dt'\Bigr|
\lesssim t^{\f{\beta+\gamma}2}2^{-\ell}d^2_{\ell}\|a\|_{\AB}
\|b\|_{\LB}^{\f12}\|\p_1b\|_{\LB}^{\beta}\|\p_1b\|_{\AB}^{\f14-\beta}\\
&\times\|\p_2b\|_{\LB}^{\gamma}\|\p_2b\|_{\AB}^{\f14-\gamma}
\|c\|_{\LB}^{\f12}\|\p_1c\|_{\AB}^{\f14}\|\p_2c\|_{\AB}^{\f14},
\quad\forall\ \beta,\gamma\in[0,\f14].
\end{align*}
\end{rmk}

\begin{lem}\label{lem2.6}
{\sl Let $a,~b,~c$ be regular enough.
Then for any $\alpha\in[\f14,\f12)$,
there exists some $(d_{\ell})_{\ell\in\Z}$ on the unit sphere of $\ell^1$
such that for every $\ell\in\Z$, there holds
\begin{equation}\begin{split}\label{lem2.6h}
\Bigl|\int_0^t\bigl(\Dvl(a\otimes b)\,,\,\Dvl c\bigr)\,dt'\Bigr|
\lesssim& t^{\alpha}2^{-\ell}d^2_{\ell}\|a\|_{\AB}\|b\|_{\LB}^{1-2\al}
\|\p_1b\|_{\LB}^{\al}\\
&\times\|\p_2b\|_{\LB}^{\al}\|c\|_{\LB}^{2\al}
\|\p_1c\|_{\AB}^{\f12-\al}\|\p_2c\|_{\AB}^{\f12-\al}.
\end{split}\end{equation}
}\end{lem}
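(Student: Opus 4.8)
The plan is to follow the same template as the proof of the horizontal estimate \eqref{lem2.5h} in Lemma \ref{lem2.5}, the only genuinely new ingredients being a different choice of anisotropic Lebesgue exponents in the Sobolev inequality \eqref{Lp2} together with a redistribution of the time integrability. First I would apply Bony's decomposition \eqref{Bonyv} in the vertical variable to $a\otimes b$ and localize by $\Dvl$, producing exactly the same two pieces as in Lemma \ref{lem2.5}: a paraproduct sum $\sum_{|k-\ell|\leq5}\bigl(\Dvl(\Dvk a\otimes\Svk b),\Dvl c\bigr)$ and a remainder sum $\sum_{k\geq\ell-4}\bigl(\Dvl(\Skk a\otimes\Dvk b),\Dvl c\bigr)$, which are then handled term by term.

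For the spatial estimate I would keep the factor coming from $a$ in $L^2_\h$ and split $b,c$ into the conjugate horizontal exponents $p=\f{2}{1-2\al}$ and $p'=\f1\al$. Since $\al\in[\f14,\f12)$, these satisfy $\f1p+\f1{p'}=\f{1-2\al}2+\al=\f12$, so Hölder in the horizontal variables closes against the $L^2_\h$ norm of the $a$-factor; in the vertical direction I would use the splitting $L^2_\v\cdot L^\infty_\v\cdot L^2_\v$ for the paraproduct term (putting $\Svk b$ in $L^\infty_\v$) and $L^\infty_\v\cdot L^2_\v\cdot L^2_\v$ for the remainder term (putting $\Skk a$ in $L^\infty_\v$), exactly as in Lemma \ref{lem2.5}. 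Applying \eqref{Lp2} to the $b$-factor with exponent $p$ yields $\|b\|^{1-2\al}_{L^2_\h}\|\p_1b\|^{\al}_{L^2_\h}\|\p_2b\|^{\al}_{L^2_\h}$, while applying it to the $c$-factor with exponent $p'$ yields $\|\Dvl c\|^{2\al}_{L^2}\|\Dvl\p_1c\|^{\f12-\al}_{L^2}\|\Dvl\p_2c\|^{\f12-\al}_{L^2}$, the vertical $L^2_\v$ norm passing through by Hölder in $x_3$ since the three exponents sum to $1$.

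The decisive bookkeeping is the time integration. Here I would place $a$ in $\AB$, all three $b$-factors in $\LB$, and the two horizontal derivatives of $c$ in $\AB$ (keeping $c$ itself in $\LB$). The corresponding reciprocal time-Lebesgue indices then sum to $\f12+\bigl(\f12-\al\bigr)=1-\al$, so the generalized Hölder inequality in $t'$ extracts precisely the factor $t^{\al}$ while producing the claimed norms; this is the analogue of the interpolation $\|a\|^{2\al}_{\LB}\|a\|^{1-2\al}_{\AB}$ used in \eqref{2.10}, except that the temporal weights are now carried by $a$ and by the horizontal derivatives of $c$ rather than by $a$ alone. I would then invoke the embeddings \eqref{LLL} to pass to the Chemin--Lerner norms, which attaches a weight $2^{-\f12\ell}d_\ell$ to the $\Dvl c$ factor and a weight $2^{-\f12k}d_k$ to whichever of $a$ (in the paraproduct term) or $b$ (in the remainder term) carries the vertical localization.

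The final step is the dyadic summation, identical to the end of the proof of \eqref{lem2.5h}: the $\Dvl c$ factor always contributes $2^{-\f12\ell}d_\ell$, and $\sum_{|k-\ell|\leq5}2^{-\f12k}d_k+\sum_{k\geq\ell-4}2^{-\f12k}d_k\lesssim2^{-\f12\ell}d_\ell$ (the remainder sum being a convolution of $(d_k)$ with a summable sequence), so that the two weights combine into $2^{-\ell}d^2_\ell$. The point requiring the most care is not any single estimate but the simultaneous consistency of three Hölder budgets — the horizontal one ($\f1p+\f1{p'}=\f12$), the vertical one, and the temporal one (summing to $1-\al$); the range $\al\in[\f14,\f12)$ is what guarantees that $p=\f2{1-2\al}\in[4,\infty)$ and $p'=\f1\al\in(2,4]$ are both admissible in \eqref{Lp2}, reducing to the $L^4$--$L^4$ bound \eqref{L42} precisely at the endpoint $\al=\f14$, while allotting the favorable $\AB$ time-integrability to the derivatives of $c$.
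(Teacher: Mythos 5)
Your proposal is correct and follows essentially the same route as the paper's proof: Bony's decomposition \eqref{Bonyv}, the anisotropic Sobolev inequality \eqref{Lp2} with the conjugate horizontal exponents $p=\f2{1-2\al}$ for $b$ and $\f{2p}{p-2}=\f1\al$ for $c$, the time--H\"older budget $\f12+(\f12-\al)=1-\al$ yielding $t^\al$, and finally \eqref{LLL} with the dyadic summation producing $2^{-\ell}d_\ell^2$. All the exponent bookkeeping checks out against the paper's argument.
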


\begin{proof}
We first get, by using Bony's decomposition \eqref{Bonyv} that
\begin{align*}
\bigl|\bigl(\Dvl(a\otimes b)\,,\,\Dvl c\bigr)\bigr|
=\Bigl|\sum_{|k-l|\leq5}\bigl(\Dvl(\Dvk a\otimes\Svk b),\Dvl c\bigr)
+\sum_{k\geq l-4}\bigl(\Dvl(\Skk a\otimes\Dvk b),\Dvl c\bigr)\Bigr|.
\end{align*}
For $\alpha\in[\f14,\f12)$, let us take $p=\f2{1-2\al}\in[4,\infty)$.
Then by using \eqref{Lp2} with this $p$, we get
\begin{align*}
\Bigl|\sum_{|k-l|\leq5}&\bigl(\Dvl(\Dvk a\otimes\Svk b)\,,\,\Dvl c\bigr)\Bigr|
\lesssim \sum_{|k-l|\leq5}\|\Dvk a\|_{L^2}
\|\Svk b\|_{L_{\v}^{\infty}L_{\h}^p}
\|\Dvl c\|_{L_{\v}^{2}L_{\h}^{\f{2p}{p-2}}}\\
&\lesssim\sum_{|k-l|\leq5}\|\Dvk a\|_{L^2}
\|b\|^{1-2\al}_{L_{\v}^{\infty}L_{\h}^2}
\|\p_1b\|^{\al}_{L_{\v}^{\infty}L_{\h}^2}
\|\p_2b\|^{\al}_{L_{\v}^{\infty}L_{\h}^2}
\|\Dvl c\|^{2\al}_{L^{2}}
\|\Dvl\p_1 c\|^{\f12-\al}_{L^{2}}\|\Dvl\p_2 c\|^{\f12-\al}_{L^{2}},
\end{align*}
and
\begin{align*}
\Bigl|\sum_{k\geq l-4}&\bigl(\Dvl(\Skk a\otimes\Dvk b)\,,\,\Dvl c\bigr)\Bigr|
\lesssim \sum_{k\geq l-4}\|\Skk a\|_{L_{\v}^{\infty}L_{\h}^2}
\|\Dvk b\|_{L_{\v}^2L_{\h}^p}
\|\Dvl c\|_{L_{\v}^{2}L_{\h}^{\f{2p}{p-2}}}\\
&\lesssim\sum_{k\geq l-4}\|a\|_{L_{\v}^{\infty}L_{\h}^2}
\|\Dvk b\|^{1-2\al}_{L^2}\|\Dvk \p_1b\|^{\al}_{L^2}\|\Dvk \p_2b\|^{\al}_{L^2}
\|\Dvl c\|^{2\al}_{L^{2}}\|\Dvl\p_1 c\|^{\f12-\al}_{L^{2}}\|\Dvl\p_2 c\|^{\f12-\al}_{L^{2}}.
\end{align*}
As a result, we can achieve
\begin{align*}
\Bigl|\int_0^t\bigl(\Dvl(a \otimes b)\,,\,\Dvl c\bigr)\,dt'\Bigr|
\lesssim& t^{\alpha}\sum_{|k-l|\leq5}\|\Dvk a\|_{L^2_t(L^2)}
\|b\|^{1-2\al}_{L_t^{\infty}(L_{\v}^{\infty}L_{\h}^2)}
\|\p_1b\|^{\al}_{L^\infty_t(L_{\v}^{\infty}L_{\h}^2)}\\
&\quad\times\|\p_2b\|^{\al}_{L^\infty_t(L_{\v}^{\infty}L_{\h}^2)}
\|\Dvl c\|^{2\al}_{L_t^{\infty}(L^{2})}
\|\Dvl\p_1 c\|^{\f12-\al}_{L^2_t(L^{2})}
\|\Dvl\p_2 c\|^{\f12-\al}_{L^2_t(L^{2})}\\
&+t^{\alpha}\sum_{k\geq l-4}\|a\|_{L^2_t(L_{\v}^{\infty}L_{\h}^2)}
\|\Dvk b\|^{1-2\al}_{L_t^{\infty}(L^2)}
\|\Dvk \p_1b\|^{\al}_{L^\infty_t(L^2)}\\
&\quad\times\|\Dvk \p_2b\|^{\al}_{L_t^\infty(L^2)}
\|\Dvl c\|^{2\al}_{L_t^{\infty}(L^{2})}
\|\Dvl\p_1 c\|^{\f12-\al}_{L^2_t(L^{2})}
\|\Dvl\p_2 c\|^{\f12-\al}_{L_t^2(L^{2})},
\end{align*}
which together with the inequality
\eqref{LLL} leads to the desired estimate \eqref{lem2.6h}.
\end{proof}

\setcounter{equation}{0}
\section{The lifespan of AN-S with full dissipation}\label{Sect3}

In this section, we consider the case with full dissipation,
i.e. with $0<\nu_3\leq \nu_2\leq \nu_1$. The purpose of
the first subsection is to prove Theorem \ref{thm1},
while the second subsection is devoted to the
refined lower bound estimate of lifespan with $L^\infty$ initial data.

\subsection{Proof of Theorem \ref{thm1}}
According to the value of $p$, we divide the proof into 2 cases.

{\bf Case 1. when $3<p<\infty$.} In view of \eqref{1.1},
we write the equation for $u^i$ as follows:
 \begin{equation}\label{3.1}
     \pa_t u^i-(\nu_1\pa^2_1+\nu_2\pa^2_2+\nu_3\pa^2_3)u^i
     +u\cdot\nabla u^i+\pa_i P=0.
\end{equation}
Then by taking $L^2$ inner product of \eqref{3.1} with $|u|^{p-2}u^i$,
and summing up in $i$, we get
 \begin{equation}\label{3.2}
 \f{1}{p}\frac{d}{dt}\|u\|^p_{L^p}
 +\sum^3_{k=1}\nu_k\Big(\f{4(p-2)}{p^2}\bigl\|\p_k|u|^{\f{p}{2}}\bigr\|^2_{L^2}
 +\int_{\R^3}|\p_ku|^2|u|^{p-2}\,dx\Big)
 \leq\sum_{k=1}^{3}\bigl|\bigl(\p_k P , |u|^{p-2}u^k\bigr)\bigr|,
 \end{equation}
where we have used integration by parts, which makes sense since $p>3$, so that
\begin{align*}
-\sum_{i=1}^{3}\int_{\R^3}\pa_k^2 u^i\cdot|u|^{p-2}u^i\,dx
&=\int_{\R^3}\sum_{i=1}^{3}|\pa_k u^i|^2\cdot|u|^{p-2}
+\int_{\R^3}\f12\pa_k\Bigl(\sum_{i=1}^{3} |u^i|^2\Bigr)\cdot\pa_k |u|^{p-2}\,dx\\
&=\int_{\R^3}|\pa_k u|^2\cdot|u|^{p-2}
+(p-2)\pa_k |u|\cdot\pa_k |u|\cdot|u|^{p-2}\,dx,
\end{align*}
and the divergence-free condition $\dive u=0$ so that
$$\sum_{i=1}^{3}\int_{\R^3}\bigl(u\cdot\nabla u^i\bigr)\cdot|u|^{p-2}u^i\,dx
=\sum_{i=1}^{3}\int_{\R^3}\bigl(u\cdot\nabla |u^i|^2\bigr)\cdot|u|^{p-2}\,dx
=0.$$

On the other hand, we split the pressure $P$ into $P_1+P_2$ with
$$P_1=(-\Delta)^{-1}\sum_{i\neq3\text{ or }j\neq3}\p_i\p_j(u^ju^i),
\andf P_2=(-\Delta)^{-1}\p_3^2(u^3u^3).$$
Then by using integration by parts, we get
\begin{align*}
\sum_{k=1}^2\bigl|\bigl(\p_k P,|u|^{p-2}u^k\bigr)\bigr|
+\bigl|\bigl(\p_3 P_1,|u|^{p-2}u^3\bigr)\bigr|
&\lesssim\bigl\|(-\Delta)^{-1}\nabla^2(u\otimes u)\bigr\|_{L^p}
\bigl\|\nablah\bigl(|u|^{p-2}u\bigr)\bigr\|_{L^{\f{p}{p-1}}}\\
&\lesssim\|u\otimes u\|_{L^p}
\bigl\||u|^{\frac{p}{2}-1}\bigr\|_{L^{\frac{2p}{p-2}}}
\Big(\int_{\R^3}|\nabla_{\h}u|^2|u|^{p-2}\,dx\Big)^{\f12}.
\end{align*}
While Lemma \ref{lem2.4} gives
$$
\|u\otimes u\|_{L^p}
\bigl\||u|^{\f p2-1}\bigr\|_{L^{\frac{2p}{p-2}}}
=\bigl\||u|^{\f p2}\bigr\|_{L^4}^{\f4p}
\bigl\||u|^{\f p2}\bigr\|_{L^2}^{1-\f2p}
\lesssim\bigl\||u|^{\f p2}\bigr\|_{L^2}^{1-\f1p}
\prod_{k=1}^3\bigl\|\pa_k|u|^{\f p2}\bigr\|_{L^2}^{\f1p}.$$
As a result, we deduce
\begin{equation}\begin{split}\label{3.3}
\sum_{k=1}^2\bigl|\bigl(\p_k P,|u|^{p-2}u^k\bigr)\bigr|
&+\bigl|\bigl(\p_3 P_1,|u|^{p-2}u^3\bigr)\bigr|\\
&\lesssim\bigl\||u|^{\f p2}\bigr\|_{L^2}^{1-\f1p}
\prod_{k=1}^3\bigl\|\pa_k|u|^{\f p2}\bigr\|_{L^2}^{\f1p}
\Big(\int_{\R^3}|\nabla_{\h}u|^2|u|^{p-2}\,dx\Big)^{\f12}.
\end{split}\end{equation}
While for $\bigl(\p_3 P_2,|u|^{p-2}u^3\bigr)$,
we can use the divergence-free condition to get
\begin{align*}
\int_{\R^3}\p_3(-\Delta)^{-1}\p^2_3(u^3\cdot u^3)|u|^{p-2}u^3\,dx
&=2\int_{\R^3}\p^2_3(-\Delta)^{-1}\p_3u^3\cdot u^3|u|^{p-2}u^3\,dx\\
&=-2\int_{\R^3}\p^2_3(-\Delta)^{-1}\diveh\uh\cdot u^3|u|^{p-2}u^3\,dx.
\end{align*}
Then we can deduce, by using integration by parts and Lemma \ref{lem2.4} that
\begin{equation}\begin{split}\label{3.4}
|\bigl(\p_3 P_2,|u|^{p-2}u^3\bigr)|
&\lesssim\bigl\|\p^2_3(-\Delta)^{-1}(u^3)\bigr\|_{L^{2p}}
\bigl\|\nablah\bigl(u^3|u|^{p-2}u^3\bigr)\bigr\|_{L^{\f{2p}{2p-1}}}\\
&\lesssim\|u\|_{L^{2p}}
\bigl\||u|^{\f p2}\bigr\|_{L^{\frac{2p}{p-1}}}
\Big(\int_{\R^3}|\nabla_{\h}u|^2|u|^{p-2}\,dx\Big)^{\f12}\\
&\lesssim\bigl\||u|^{\f p2}\bigr\|_{L^2}^{1-\f1p}
\prod_{k=1}^3\bigl\|\pa_k|u|^{\f p2}\bigr\|_{L^2}^{\f1p}
\Big(\int_{\R^3}|\nabla_{\h}u|^2|u|^{p-2}\,dx\Big)^{\f12}.
\end{split}\end{equation}

Now by substituting the estimates \eqref{3.3} and \eqref{3.4} into
\eqref{3.2}, and then integrating the resulting inequality in time, we achieve
\begin{equation}\begin{split}\label{3.5}
\|u\|^p_{L^{\infty}_t(L^p)}&
+\sum_{k=1}^3\nu_k\Bigl(\f{4(p-2)}{p^2}\bigl\|\p_k|u|^{\f{p}{2}}\bigr\|^2_{L^2_t(L^2)}
+\int_0^t\int_{\R^3}|\p_ku|^2|u|^{p-2}\,dx\Bigr)\\
&\leq \|u_0\|^p_{L^p}+Ct^{\frac{p-3}{2p}}\|u\|^{\frac{p-1}{2}}_{L^{\infty}_t(L^p)}
\prod_{k=1}^3\bigl\|\pa_k|u|^{\f p2}\bigr\|_{L^2_t(L^2)}^{\f1p}
\Bigl(\int_0^t\int_{\R^3}|\nabla_{\h}u|^2|u|^{p-2}\,dx\Bigr)^{\f12}.
\end{split}\end{equation}

In the following, we shall use a standard continuity argument to get
a lower bound estimate for the lifespan $T^\star$ for the solution $u$.
To do this, let us denote
\begin{align*}
T^{\ast}\eqdef \sup\Big\{&T>0:u\text{ exists on $[0,T)$, and for any $t<T$, there holds}\\\
&\|u\|^p_{L^{\infty}_t(L^p)}+\sum^3_{k=1}\nu_k
\Big(\f{4(p-2)}{p^2}\bigl\|\p_k|u|^{\f{p}{2}}\bigr\|^2_{L^2_t(L^2)}
+\int_0^t\int_{\R^3}|\p_ku|^2|u|^{p-2}\,dx\Big)
\leq 2\|u_0\|^p_{L^p}\Big\},
\end{align*}
which is positive due to the local well-posedness result.
Then for $0<t< T^{\ast}$, \eqref{3.5} implies
\begin{align*}
\|u\|^p_{L^{\infty}_t(L^p)}+\sum_{k=1}^3\nu_k
\Big(\f{4(p-2)}{p^2}&\bigl\|\p_k|u|^{\f{p}{2}}\bigr\|^2_{L^2_t(L^2)}
+\int_0^t\int_{\R^3}|\p_ku|^2|u|^{p-2}\,dx\Big)\\
&\leq\|u_0\|^p_{L^p}
+C|T^{\ast}|^{\frac{p-3}{2p}}\nu_2^{-\f{1}{2}}
(\nu_1\nu_2\nu_3)^{-\frac{1}{2p}}\|u_0\|^{p+1}_{L^p}.
\end{align*}
Then in view of the definition of $T^\ast$, there must hold
$$C|T^{\ast}|^{\frac{p-3}{2p}}\nu_2^{-\f{1}{2}}
(\nu_1\nu_2\nu_3)^{-\frac{1}{2p}}\|u_0\|^{p+1}_{L^p}
>\|u_0\|^p_{L^p},$$
which guarantees the existence time of the solution $u$ is at least
$$T^\star\geq T^{\ast}> C\nu^{\frac{p}{p-3}}_2(\nu_1\nu_2\nu_3)^{\frac{1}{p-3}}\|u_0\|^{-\frac{2p}{p-3}}_{L^p}.$$
This is exactly the first desired lower bound estimate \eqref{life div} when $3<p<\infty$.
\smallskip

{\bf Case 2. when $p=\infty$.} Instead of energy estimate
and continuity argument,
the strategy for $L^\infty$ case is to use the following
Banach fixed point theorem:
\begin{lem}[Lemma $5.5$ in \cite{BCD}]\label{lemfix}
{\sl Let $E$ be a Banach space, $\cB$ a continuous bilinear map from
$E\times E$ to $E$, and $\al$ a positive number such that
$$\al<\f{1}{4\|\cB\|}\with\|\cB\|\eqdef\sup_{\|u\|,\|v\|\leq1}
\|\cB(u,v)\|.$$
Then for any $a$ in the ball $B(0,\al)$ in $E$, a unique $x$
exists in $B(0,2\al)$ in $E$ such that
$$x=a+\cB(x,x).$$
}\end{lem}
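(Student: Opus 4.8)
The plan is to recast the quadratic equation $x=a+\cB(x,x)$ as a fixed-point problem and invoke the contraction mapping principle on a well-chosen ball. Set $\Phi(x)\eqdef a+\cB(x,x)$, and work on the closed ball $\overline{B}(0,2\al)\subset E$, which is a complete metric space for the distance induced by $\|\cdot\|$. A solution of the equation is exactly a fixed point of $\Phi$, so it suffices to show that $\Phi$ is a contracting self-map of this ball.

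First I would check stability, i.e. that $\Phi$ maps $\overline{B}(0,2\al)$ into itself. For $\|x\|\leq 2\al$, bilinearity together with the definition of $\|\cB\|$ gives $\|\Phi(x)\|\leq\|a\|+\|\cB\|\,\|x\|^2\leq\al+4\|\cB\|\al^2$. The hypothesis $\al<\f{1}{4\|\cB\|}$ is equivalent to $4\|\cB\|\al<1$, so the second term is strictly less than $\al$, whence $\|\Phi(x)\|<2\al$. Thus $\Phi$ indeed sends the ball into itself (in fact into the open ball, which will place the solution inside $B(0,2\al)$ as claimed).

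Next I would verify the contraction property. For $x,y\in\overline{B}(0,2\al)$, the bilinear identity $\cB(x,x)-\cB(y,y)=\cB(x,x-y)+\cB(x-y,y)$ yields $\|\Phi(x)-\Phi(y)\|\leq\|\cB\|\bigl(\|x\|+\|y\|\bigr)\|x-y\|\leq 4\|\cB\|\al\,\|x-y\|$. Since $4\|\cB\|\al<1$, the map $\Phi$ is a genuine contraction with ratio $k=4\|\cB\|\al$. The Banach fixed point theorem then furnishes a unique fixed point $x\in\overline{B}(0,2\al)$, i.e. a unique solution of $x=a+\cB(x,x)$ in that ball.

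The only points that require attention are bookkeeping ones: one must track the constant $4$ and the radius $2\al$ so that the \emph{same} smallness threshold $\al<\f{1}{4\|\cB\|}$ simultaneously closes the stability estimate and forces the contraction ratio below $1$. There is no substantive obstacle here; this is precisely the classical abstract lemma underlying Picard iteration, and its entire content is the tension between the quadratic nonlinearity $\cB(x,x)$ and the linear growth $\|x\|+\|y\|$ appearing in the difference estimate.
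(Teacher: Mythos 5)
Your argument is correct; the paper itself states this lemma without proof, quoting it directly from [BCD], and your contraction-mapping argument on the closed ball $\overline{B}(0,2\al)$ is precisely the standard proof given there (stability from $\al+4\|\cB\|\al^2<2\al$, contraction ratio $4\|\cB\|\al<1$ via the splitting $\cB(x,x)-\cB(y,y)=\cB(x,x-y)+\cB(x-y,y)$). Your remark that the fixed point actually lands in the open ball also disposes of any ambiguity about whether $B(0,2\al)$ is open or closed.
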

Let us continue our proof.
First, it is standard to write \eqref{1.1} in the integral form:
\begin{equation}\label{3.6}
u(t)=e^{t(\nu_1\p^{2}_1+\nu_2\p^{2}_2+\nu_3\p^{2}_3)}u_0
+\int_0^t e^{(t-s)(\nu_1\p^{2}_1+\nu_2\p^{2}_2+\nu_3\p^{2}_3)}\PP\dive(u\otimes u)(s)\,ds,
\end{equation}
where $\PP$ is the Leray projection operator on divergence free vector fields.

For the linear part, there holds
$$\bigl\|e^{t(\nu_1\p^{2}_1+\nu_2\p^{2}_2+\nu_3\p^{2}_3)}u_0\bigr\|
_{L^{\infty}_t(L^{\infty})}\leq\|u_0\|_{L^{\infty}}.$$
While for the bilinear part, in view of the assumption
$0<\nu_3\leq \nu_2\leq \nu_1$, we have
\begin{align*}
\bigl\|\int_0^t e^{(t-s)(\nu_1\p^{2}_1+\nu_2\p^{2}_2+\nu_3\p^{2}_3)}
 \PP\dive(u\otimes v)\bigr\|_{L^{\infty}_t(L^{\infty})}
 &\lesssim \nu_3^{-\f12}\int_0^t \frac{1}{\sqrt{t-s}}\,ds
 \cdot\|u\|_{L^{\infty}_t(L^{\infty})}\|v\|_{L^{\infty}_t(L^{\infty})}\\
 &\lesssim \nu_3^{-\f12}\sqrt{t}
 \cdot\|u\|_{L^{\infty}_t(L^{\infty})}\|v\|_{L^{\infty}_t(L^{\infty})}.
\end{align*}
Then in view of Lemma \ref{lemfix},
as long as
$$\|u_0\|_{L^{\infty}}<C\nu_3^{\f12} t^{-\f12},\quad\text{i.e.}\quad
t<C\nu_3\|u_0\|_{L^{\infty}}^{-2},$$
we can seek a unique strong solution to \eqref{3.6}. This means that the lifespan
of $u$ at least satisfies
$$T^{\star}\geq C \nu_3\|u_0\|^{-2}_{L^{\infty}}.$$
Moreover, this solution remains in the ball with radius
$2\|u_0\|_{L^{\infty}}$ before $T^{\star}$, namely
$$\|u(t)\|_{L^{\infty}}\leq2\|u_0\|_{L^{\infty}},\quad\forall\ t\leq T^{\star}.$$

Combining the above two cases, now we have completed the proof of Theorem \ref{thm1}.

\subsection{Proof of Theorem \ref{thm2}}
Since $u_0\in L^\infty$ vanishes at infinity, for any $\e>0$,
we can split $u_0$ into $u_{1,0}+u_{2,0}$
with $u_{1,0}\in L^4\cap L^{\infty}$ and $\|u_{2,0}\|_{L^{\infty}}<\e$.
We define $u_2$ to be the solution of the following Navier-Stokes system:
\begin{equation}\label{3.7}
\left\{
\begin{array}{l}
\partial_tu_2+u_2\cdot\nabla u_2
-(\nu_1\pa^2_1+\nu_2\pa_2^2+\nu_3\pa_3^2) u_2+\nabla\bar P=0,\\
\dive u_2 = 0,\\
u_2|_{t=0} =u_{2,0},
\end{array}
\right.
\end{equation}
for some properly chosen pressure function $\bar P$.
And then it follows from \eqref{1.1} and \eqref{3.7} that
$u_1\eqdef u-u_2$ and $\Pi\eqdef P-\bar P$ solve the following perturbed system:
\begin{equation}\label{3.8}
     \left\{
     \begin{array}{l}
	 \partial_tu_1-(\nu_1\pa^2_1+\nu_2\pa_2^2+\nu_3\pa_3^2)u_1
+u\cdot\nabla u_1+u_1\cdot\nabla u_2+\nabla\Pi=0,\\
     \dive u_1 = 0,\\
      u_1|_{t=0} =u_{1,0}.
     \end{array}
     \right.
\end{equation}

We first get, by using Theorem \ref{thm1} and the restriction
$\|u_{2,0}\|_{L^{\infty}}<\e$ that
the solution $u_2$ to \eqref{3.7} at least exists on
$[0,T_2]$, where $T_2\eqdef C\nu_3\e^{-2}$. Moreover,
there holds
\begin{equation}\label{boundu2}
\|u_2(t)\|_{L^{\infty}}<2\e,\quad\forall\ t\in[0,T_2].
\end{equation}

Next, by taking $L^2$ inner product
of the first equation in \eqref{3.8} with $|u_1|^2 u_1,$ we get
 \begin{equation}\label{3.10}
 \f{1}{4}\frac{d}{dt}\|u_1\|^4_{L^4}+\sum_{k=1}^3\nu_k
 \Big(\int_{\R^3}|\p_ku_1|^2|u_1|^2\,dx
 +\f12\bigl\|\p_k|u_1|^{2}\bigr\|^2_{L^2}\Big)
 \leq \bigl|\bigl(u_1\cdot \nabla u_2+\nabla\Pi,|u_1|^{2}u_1\bigr)\bigr|.
 \end{equation}
For the terms on the right-hand side,
we first get, by using integration by parts that
$$\bigl|\bigl(u\cdot\nabla u^i_2,|u_1|^{2}u^i_1\bigr)\bigr|
=\Bigl|\sum_{j=1}^3\int_{\R^3}u^i_2u_1^j\p_ju_1^i|u_1|^2
+u^i_2u_1^ju_1^i\p_j|u_1|^2\,dx\Bigr|.$$
Then by using H\"older inequality and Young's inequality, we get
\begin{equation}\begin{split}\label{3.11}
\bigl|\bigl(u_1\cdot \nabla u_2,|u_1|^{2}u_1\bigr)\bigr|
&\leq \|u_2\|_{L^{\infty}}\|u_1\|^2_{L^4}
\Bigl(\big(\int_{\R^3}|\nabla u_1|^2|u_1|^2\,dx\big)^{\f12}
+\bigl\|\nabla|u_1|^{2}\bigr\|_{L^2}\Bigr)\\
&\leq \f{\nu_3}{8}\Bigl(\int_{\R^3}|\nabla u_1|^2|u_1|^2\,dx
+\bigl\|\nabla|u_1|^{2}\bigr\|^2_{L^2}\Bigr)
+\f{C}{\nu_3}\|u_2\|^2_{L^{\infty}}\|u_1\|^4_{L^4}.
\end{split}\end{equation}

As for the pressure term $\Pi$, we write it as
\begin{align*}
\nabla\Pi&=\nabla(-\Delta)^{-1}\sum_{i,j=1}^3
\pa_i\pa_j(u_1^iu_1^j+2u_2^iu_1^j)\\
&=(-\Delta)^{-1}\sum_{i,j=1}^3\nabla
\pa_j(u_1\cdot\nabla u_1^j+2 u_2\cdot\nabla u_1^j)\eqdef II_1+II_2.
\end{align*}
Notice that $(-\Delta)^{-1}\nabla\pa_j$ is a Fourier multiplier
of degree $0$, then we can get
\begin{equation}\begin{split}\label{3.12}
\bigl|\bigl(II_1,|u_1|^{2}u_1\bigr)\bigr|
&\leq C\|u_1\|_{L^4}\bigl\||u_1|^2\bigr\|_{L^{4}}
\big(\int_{\R^3}|\nabla u_1|^2|u_1|^2\,dx\big)^{\f12}\\
&\leq \|u_1\|^{\f32}_{L^4}\bigl\|\p_1|u_1|^2\bigr\|^{\f14}_{L^{2}}
\bigl\|\p_2|u_1|^2\bigr\|^{\f14}_{L^{2}}\bigl\|\p_3|u_1|^2\bigr\|^{\f14}_{L^{2}}
\big(\int_{\R^3}|\nabla u_1|^2|u_1|^2\,dx\big)^{\f12}\\
&\leq\f{\nu_3}8\int_{\R^3}|\nabla u_1|^2|u_1|^2\,dx
+\f{C}{\nu_3}\|u_1\|^{3}_{L^4}\bigl\|\p_1|u_1|^2\bigr\|^{\f12}_{L^{2}}
\bigl\|\p_2|u_1|^2\bigr\|^{\f12}_{L^{2}}\bigl\|\p_3|u_1|^2\bigr\|^{\f12}_{L^{2}},
\end{split}\end{equation}
where in the second step, we have used \eqref{L43} again.
Similarly, there holds
\begin{equation}\begin{split}\label{3.13}
\bigl|\bigl(II_2,|u_1|^{2}u_1\bigr)\bigr|
&\leq C\|u_2\|_{L^\infty}\|u_1\|_{L^{4}}^2
\big(\int_{\R^3}|\nabla u_1|^2|u_1|^2\,dx\big)^{\f12}\\
&\leq\f{\nu_3}8\int_{\R^3}|\nabla u_1|^2|u_1|^2\,dx
+\f{C}{\nu_3}\|u_2\|^2_{L^{\infty}}\|u_1\|^4_{L^4}.
\end{split}\end{equation}

Now by submiting \eqref{3.11}-\eqref{3.13} into \eqref{3.10}
and then integrating in time, we achieve
\begin{equation}\begin{split}\label{3.14}
&\|u_1\|^4_{L^{\infty}_t(L^4)}+\sum_{k=1}^3\nu_k
\Big(\bigl\||\p_ku_1|\cdot|u_1|\bigr\|_{L^2_t(L^2)}^2
+\bigl\|\p_k|u_1|^{2}\bigr\|^2_{L^2_t(L^2)}\Big)\\
&\leq \|u_{1,0}\|^4_{L^4}+\f{C}{\nu_3}t\|u_2\|^2_{L^{\infty}_t(L^{\infty})}
\|u_1\|^4_{L^{\infty}_t(L^4)}\\
&\qquad+\f{C}{\nu_3}t^{\f14}\|u_1\|^{3}_{L^{\infty}_t(L^4)}
\bigl\|\p_1|u_1|^{2}\bigr\|^{\f12}_{L^2_t(L^2)}
\bigl\|\p_2|u_1|^{2}\bigr\|^{\f12}_{L^2_t(L^2)}
\bigl\|\p_3|u_1|^{2}\bigr\|^{\f12}_{L^2_t(L^2)}.
\end{split}\end{equation}
 Let us denote
 \begin{align*}
   T^{\ast}_1\eqdef \sup \Big\{&0<T\leq T_2:
   u\text{ exists on $[0,T)$, and for any $t<T$, there holds}\\
   &\|u_1\|^4_{L^{\infty}_t(L^4)}
   +\sum_{k=1}^3\nu_k\Big(\bigl\||\p_ku_1|\cdot|u_1|\bigr\|_{L^2_t(L^2)}^2
   +\bigl\|\p_k|u_1|^{2}\bigr\|^2_{L^2_t(L^2)}\Big)<2\|u_{1,0}\|^4_{L^4}\Big\}.
 \end{align*}
Then for any $t<T^{\ast}_1$, we can obtain from
\eqref{boundu2} and \eqref{3.14} that
\begin{equation}\begin{split}\label{3.15}
\|u_1\|^4_{L^{\infty}_t(L^4)}
+&\sum_{k=1}^3\nu_k\Big(\bigl\||\p_ku_1|\cdot|u_1|\bigr\|_{L^2_t(L^2)}^2
+\bigl\|\p_k|u_1|^{2}\bigr\|^2_{L^2_t(L^2)}\Big)\\
&\leq \|u_{1,0}\|^4_{L^4}+\f{C}{\nu_3}
\Bigl(t\e^2\|u_{1,0}\|^4_{L^4}
+t^{\f14}\nu_1^{-\f14}\nu_2^{-\f14}\nu_3^{-\f14}\|u_{1,0}\|^6_{L^4}\Bigr).
\end{split}\end{equation}
Thus if
$$T^{\ast}_1
<\min\Bigl\{\f{\nu_3}{4C}\e^{-2}\,,\,
\bigl(\f{\nu_3}{4C}\bigr)^4\nu_1\nu_2\nu_3\|u_{1,0}\|_{L^4}^{-8}\Bigr\},$$
then we can deduce from \eqref{3.15} that for any $t<T^{\star}_1$, there holds
$$\|u_1\|^4_{L^{\infty}_t(L^4)}
+\sum_{k=1}^3\nu_k\Big(\bigl\||\p_ku_1|\cdot|u_1|\bigr\|_{L^2_t(L^2)}^2
+\bigl\|\p_k|u_1|^{2}\bigr\|^2_{L^2_t(L^2)}\Big)
\leq \f32\|u_{1,0}\|^4_{L^4},$$
which contradict to the definition of $T^{\star}_1$.
As a result, actually there must hold
\begin{equation}\label{3.16}
T^{\ast}_1\geq\min\Bigl\{T_2\,,\,\f{\nu_3}{4C}\e^{-2}\,,\,
\bigl(\f{\nu_3}{4C}\bigr)^4\nu_1\nu_2\nu_3\|u_{1,0}\|_{L^4}^{-8}\Bigr\}.
\end{equation}
Hence for fixed $u_0$ and sufficiently large $\nu_1$, we can take
$\e$ to be $\nu_1^{-1}$ which is sufficiently small.
And then it is not difficult to derive from the chosen of
$T_2= C\nu_3\e^{-2}$ and \eqref{3.16} that
$$\lim_{\nu_1\rightarrow\infty}T_2=\lim_{\nu_1\rightarrow\infty}T_1^\ast=\infty,$$
which in particular implies the lifespan of $u=u_1+u_2$
also tends to infinity as $\nu_1\rightarrow\infty$.
This completes the proof of Theorem \ref{thm2}.

\setcounter{equation}{0}
\section{The lifespan of AN-S with partial dissipation}\label{Sect4}

\subsection{Proof of Theorem \ref{thm3}}\label{sub4.1}
{\bf Step 1. $\BB$ estimate of $u$.}
By applying $\Dvl$ to \eqref{ANS},
and then taking $L^2$ inner product of the resulting equation with $\Dvl u$ gives
$$\f12\f{d}{dt}\|\Dvl u\|^2_{L^2}
+\nu_1 \|\Dvl \p_1 u\|^2_{L^2}+\nu_2 \|\Dvl \p_2 u\|^2_{L^2}
\leq\bigl|\bigl(\Dvl(u^{\h}\cdot \nabla_{\h}u)\,,\,\Dvl u \bigr)
+\bigl(\Dvl(u^3\cdot \p_3u)\,,\,\Dvl u \bigr)\bigr|.$$
Integrating in time and then using Lemma \ref{lem2.5},
we get for any $\alpha\in(0,\f12)$, there holds
\begin{equation}\begin{split}\label{4.1}
\|\Dvl u^{\h}\|^2_{L^{\infty}_t(L^2)}&+2\nu_1 \|\Dvl \p_1 u^{\h}\|^2_{L^2_t(L^2)}
+2\nu_2 \|\Dvl \p_2 u^{\h}\|^2_{L^2_t(L^2)}\\
&\leq \|\Dvl u_0^{\h}\|^2_{L^2}
+C t^{\alpha}2^{-\ell}d^2_{\ell}\|\nabla_{\h}u\|^{2\alpha}_{\LB}
\|\nabla_{\h}u\|^{1-2\alpha}_{\AB}\|u\|_{X(t)}^2,
\end{split}\end{equation}
where $\|f\|_{X(t)}\eqdef\|f\|^{\f12}_{\LB}
\|\p_1f\|^{\f14}_{\AB}\|\p_2f\|^{\f14}_{\AB}$.
Taking square root of \eqref{4.1}, multiplying $2^\ell$ to both sides
of the resulting inequality, and then summing up in $\ell$ gives
\begin{equation}\begin{split}\label{4.2}
\|u\|_{\LB}&+\nu_1^{\f12} \|\p_1 u\|_{\AB}+\nu_2^{\f12}\|\p_2 u\|_{\AB}\\
&\leq \|u_0\|_{\BB}+Ct^{\frac{\alpha}{2}}\|u\|_{X(t)}
\|\nabla_{\h}u\|^{\alpha}_{\LB}\|\nabla_{\h}u\|^{\f12-\alpha}_{\AB}.
\end{split}\end{equation}

{\bf Step 2. $\BB$ estimate of $\nabla u$.}
We first write the equation satisfied by $\dvl\nabla u$ as
\begin{equation}\label{4.3}
\pa_t\dvl\nabla u-(\nu_1\pa^2_1+\nu_2\pa^2_2)\dvl\nabla u
+\dvl\nabla(u\cdot\nabla u)+\nabla\dvl\nabla P=0.
\end{equation}
By taking $L^2$ inner product of \eqref{4.3} with $\dvl\nabla u$, we obtain
\begin{equation}\begin{split}\label{4.4}
&\f12\|\Dvl\nabla u\|^2_{L^{\infty}_t(L^2)}
+\sum_{i=1}^2\nu_i\|\pa_i\Dvl\nabla u\|^2_{L^2_t(L^2)}
\leq \f12\|\Dvl\nabla u_0\|^2_{L^2}+\cA+\cB,~\text{ where}\\
\cA&\eqdef\int_0^t\Bigl(\Dvl\nabla(\uh\cdot \nablah u)
\,,\,\Dvl \nabla u \Bigr)\,dt',\quad
\cB\eqdef\int_0^t\Bigl(\Dvl\nabla(u^{3}\cdot \p_3u)
\,,\,\Dvl\nabla u \Bigr)\,dt'.
\end{split}\end{equation}
And in view of Leibniz formula, we can write
$$\cA=\int_0^t\Bigl(\Dvl(\nabla u^{\h}\cdot \nabla_{\h}u)\,,\,\Dvl \nabla u \Bigr)\,dt'
+\int_0^t\Bigl(\Dvl(\uh\cdot \nablah\nabla u)\,,\,\Dvl \nabla u \Bigr)\,dt'
\eqdef \cA_1+\cA_2,$$
and
$$\cB=\int_0^t\Bigl(\Dvl(\nabla u^3\cdot \pa_3u)\,,\,\Dvl \nabla u \Bigr)\,dt'
+\int_0^t\Bigl(\Dvl(u^3\cdot \pa_3\nabla u)\,,\,\Dvl \nabla u \Bigr)\,dt'
\eqdef \cB_1+\cB_2.$$

Now let us handle these term by term.
By using \eqref{lem2.5h} with $a=\nablah u,~b=\nabla\uh$
and $c=\nabla u$, we can obtain for any $\al\in(0,\f12)$ that
\begin{equation}\label{4.5}
     \begin{aligned}
     |\cA_1| \lesssim& t^{\alpha} 2^{-\ell}d^2_{\ell}
     \|\nabla_{\h}u\|^{2\alpha}_{\LB}
     \|\nabla_{\h}u\|^{1-2\alpha}_{\AB}\\
&\times\|\nabla u\|_{\LB}
\|\p_1\nablah u\|^{\f12}_{\AB}\|\p_2\nablah u\|^{\f12}_{\AB}.
     \end{aligned}
 \end{equation}
While in view of Remark \ref{rmk2.2}, we get for $\alpha\in(0,\f18]$ that
 \begin{equation}
     \begin{aligned}
|\cA_2| \lesssim t^{\alpha}2^{-\ell}d^2_{\ell}
&\|\uh\|^{\f12}_{\LB}\|\p_1\uh\|^{\f14}_{\AB}
\|\p_2\uh\|^{2\alpha}_{\LB}\|\p_2\uh\|^{\f14-2\alpha}_{\AB}\\
&\times \|\nablah\nabla u\|_{\AB}\|\nabla u\|^{\f12}_{\LB}
\|\p_1\nabla u\|^{\f14}_{\AB}\|\p_2\nabla u\|^{\f14}_{\AB},
     \end{aligned}
 \end{equation}
and for $\alpha\in[\f18,\f14]$ that
 \begin{equation}
     \begin{aligned}
|\cA_2| \lesssim t^{\al}2^{-\ell}d^2_{\ell}
&\|\uh\|^{\f12}_{\LB}\|\p_1\uh\|^{2\alpha-\f14}_{\LB}
\|\p_1\uh\|^{\f12-2\alpha}_{\AB}\|\p_2\uh\|^{\f14}_{\LB}\\
&\times \|\nablah\nabla u\|_{\AB}\|\nabla u\|^{\f12}_{\LB}
\|\p_1\nabla u\|^{\f14}_{\AB}\|\p_2\nabla u\|^{\f14}_{\AB}.
     \end{aligned}
 \end{equation}
As for the case when $\al\in[\f14,\f12)$, we can use \eqref{lem2.6h} to deduce
\begin{equation}\begin{split}
|\cA_2|
\lesssim& t^{\alpha}2^{-\ell}d^2_{\ell}\|\uh\|_{\LB}^{1-2\al}
\|\p_1\uh\|_{\LB}^{\al}\|\p_2\uh\|_{\LB}^{\al}\\
&\times\|\nablah\nabla u\|_{\AB}\|\nabla u\|_{\LB}^{2\al}
\|\p_1\nabla u\|_{\AB}^{\f12-\al}\|\p_2\nabla u\|_{\AB}^{\f12-\al}.
\end{split}\end{equation}

On the other hand, thanks to \eqref{lem2.5h},
there holds for any $\al\in(0,\f12)$ that
\begin{align*}
|\cB_1|\lesssim& t^{\alpha}2^{-\ell}d^2_{\ell}
\|\nabla u^3\|^{2\alpha}_{\LB}\|\nabla u^3\|^{1-2\alpha}_{\AB}
\|\p_3u\|_{\LB}^{\f12}
\|\p_1\pa_3 u\|^{\f14}_{\AB}\\
&\times\|\p_2\pa_3 u\|^{\f14}_{\AB}
\|\nabla u\|_{\LB}^{\f12}
\|\p_1\nabla u\|^{\f14}_{\AB}\|\p_2\nabla u\|^{\f14}_{\AB}.
\end{align*}
In addition, the divergence-free condition on $u$ tells us that
$$\nabla u^3=(\nablah u^3,\pa_3 u^3)
=(\nablah u^3,-\diveh\uh).$$
As a result, we can obtain
\begin{equation}\begin{split}
|\cB_1|\lesssim& t^{\alpha}2^{-\ell}d^2_{\ell}
\|\nablah u\|^{2\alpha}_{\LB}\|\nablah u\|^{1-2\alpha}_{\AB}
\|\p_3u\|_{\LB}^{\f12}
\|\p_1\pa_3 u\|^{\f14}_{\AB}\\
&\times\|\p_2\pa_3 u\|^{\f14}_{\AB}
\|\nabla u\|_{\LB}^{\f12}
\|\p_1\nabla u\|^{\f14}_{\AB}\|\p_2\nabla u\|^{\f14}_{\AB}.
\end{split}\end{equation}
While by using the communitor estimate \eqref{lem2.5v},
there holds for any $\al\in(0,\f12)$ that
\begin{equation}\begin{split}\label{4.10}
|\cB_2| \lesssim& t^{\alpha}2^{-\ell}d^2_{\ell}
\|\diveh\uh\|^{2\alpha}_{\LB}\|\diveh\uh\|^{1-2\alpha}_{\AB}\\
&\times \|\nabla u\|_{\LB}\|\p_1\nabla u\|^{\f12}_{\AB}
\|\p_2\nabla u\|^{\f12}_{\AB}.
\end{split}\end{equation}

Now, by substituting \eqref{4.5}-\eqref{4.10} into \eqref{4.4},
we achieve for any $\alpha\in(0,\f12)$ that
\begin{equation}\begin{split}\label{4.11}
\|\nabla u&\|_{\LB}+\sum_{i=1}^2\nu_i^{\f12}\|\p_i\nabla u\|_{\AB}
\leq \|\nabla u_0\|_{\BB}+Ct^{\f\al2}\Bigl(F(\al)\\
&+\|\nablah u\|^{\alpha}_{\LB}\|\nablah u\|^{\f12-\alpha}_{\AB}
\|\nabla u\|^{\f12}_{\LB}
\|\p_1\nabla u\|^{\f14}_{\AB}\|\p_2\nabla u\|^{\f14}_{\AB}\Bigr),
\end{split}\end{equation}
where the function $F(\al)$ is taken as follows:
\begin{equation*}
F(\al)=
\left\{
     \begin{split}
&\|u\|^{\f14}_{\LB}\|\p_1u\|^{\f18}_{\AB}
\|\p_2u\|^{\alpha}_{\LB}\|\p_2u\|^{\f18-\alpha}_{\AB}
\|\nablah\nabla u\|^{\f12}_{\AB}\\
&\quad\times\|\nabla u\|^{\f14}_{\LB}
\|\p_1\nabla u\|^{\f18}_{\AB}\|\p_2\nabla u\|^{\f18}_{\AB},
~\text{ when }\al\in(0,\f18];\\
&\|u\|^{\f14}_{\LB}\|\p_1u\|^{\al-\f18}_{\LB}\|\p_1u\|^{\f14-\al}_{\AB}
\|\p_2u\|^{\f18}_{\LB}\|\nablah\nabla u\|^{\f12}_{\AB}\\
&\quad \times \|\nabla u\|^{\f14}_{\LB}
\|\p_1\nabla u\|^{\f18}_{\AB}\|\p_2\nabla u\|^{\f18}_{\AB},
~\text{ when }\al\in(\f18,\f14];\\
&\|u\|_{\LB}^{\f12-\al}
\|\p_1u\|_{\LB}^{\f\al2}\|\p_2u\|_{\LB}^{\f\al2}\|\nablah\nabla u\|_{\AB}^{\f12}\\
&\quad\times\|\nabla u\|_{\LB}^{\al}
\|\p_1\nabla u\|_{\AB}^{\f14-\f\al2}\|\p_2\nabla u\|_{\AB}^{\f14-\f\al2},
~\text{ when }\al\in(\f14,\f12).
\end{split}
     \right.
\end{equation*}

{\bf Step 3. Continuity argument.}
Let us denote
\begin{align*}
 T^{\ast}\eqdef \sup\Bigl\{&T>0:u\text{ exists on $[0,T)$, and for any $t<T$, there holds}\\
 &\|u\|_{\LTB}+\nu_1^{\f12} \|\p_1 u\|_{\ATB}+\nu_2^{\f12} \| \p_2 u\|_{\ATB}\leq 2\|u_0\|_{\BB},\quad\text{and}\\
 &\|\nabla u\|_{\LTB}+\nu_1^{\f12} \|\p_1\nabla u\|_{\ATB}+\nu_2^{\f12}
 \|\p_2\nabla u\|_{\ATB}\leq 2\|\nabla u_0\|_{\BB}\Bigr\}.
\end{align*}
Then for all $t<T^{\ast}$, we can deduce from \eqref{4.2} that
\begin{equation}\begin{split}\label{4.12}
\|u\|_{\LB}+\nu_1^{\f12}&\|\p_1 u\|_{\AB}+\nu_2^{\f12} \|\p_2 u\|_{\AB}\\
&\leq \| u_0\|_{\BB}\Bigl(1+Ct^{\f\al2}\nu_1^{-\f18}\nu_2^{-\f38+\f\al2}
\|u_0\|^{\f12-\alpha}_{\BB}\|\nabla u_0\|^{\alpha}_{\BB}\Bigr).
\end{split}\end{equation}

On the other hand, it is not difficult to verify that before time
$T^{\ast}$, there holds
\begin{equation}\label{4.13}
\f{F(\al)}{\|\nabla u_0\|_{\BB}}\lesssim G(\al)\eqdef
\left\{
     \begin{split}
&\nu_1^{-\f18}\nu_2^{-\f38+\f\al2}
\|u_0\|^{\f12-\al}_{\BB}\|\nabla u_0\|^{\al}_{\BB},
~\text{ when }\al\in(0,\f18];\\
&\nu_1^{-\f3{16}+\f\al2}\nu_2^{-\f5{16}}
\|u_0\|^{\f12-\al}_{\BB}\|\nabla u_0\|^{\al}_{\BB},
~\text{ when }\al\in(\f18,\f14];\\
&\nu_1^{-\f18+\f\al4}\nu_2^{-\f38+\f\al4}
\|u_0\|^{\f12-\al}_{\BB}\|\nabla u_0\|^{\al}_{\BB},
~\text{ when }\al\in(\f14,\f12),
\end{split}
     \right.
\end{equation}
which together with the assumption $\nu_2\leq\nu_1$
implies that for any $\al\in(0,\f12)$, there holds
\begin{equation}\label{4.14}
F(\al)+\nu_1^{-\f18}\nu_2^{-\f38+\f\al2}
\|u_0\|^{\f12-\al}_{\BB}\|\nabla u_0\|^{1+\al}_{\BB}
\lesssim G(\al)\cdot\|\nabla u_0\|_{\BB}.
\end{equation}
And then we can deduce from \eqref{4.11} that
\begin{equation}\begin{split}\label{4.15}
\|\nabla u\|_{\LB}+\nu_1^{\f12} \|\p_1\nabla u\|_{\AB}
+\nu_2^{\f12} \|\p_2\nabla u\|_{\AB}
\leq \|\nabla u_0\|_{\BB}\Bigl(1+Ct^{\f{\alpha}{2}}G(\al)\Bigr).
\end{split}\end{equation}

Now we have obtained estimates \eqref{4.12} and \eqref{4.15},
which hold for all $t<T^{\ast}$. In order not to contradict to
the definition of $T^\ast$, there must hold
$$C |T^\ast|^{\f\al2}\Bigl(\nu_1^{-\f18}\nu_2^{-\f38+\f\al2}
\| u_0\|^{\f12-\al}_{\BB}\|\nabla u_0\|^{\al}_{\BB}+G(\al)\Bigr)\geq1,$$
which together with \eqref{4.14} implies the lifespan $T^\star$
of $u$ at least satisfies
$$T^\star\geq T^\ast\geq CG(\al)^{-\f2\al},$$
where $G(\al)$ is given by \eqref{4.13}. This completes the proof of Theorem \ref{thm3}.

\subsection{Proof of Theorem \ref{thm4}}
We first get, by taking $L^2_\h$ inner produce of \eqref{eq4} with $u$ that
\footnote{The subscript $\h$ stands for horizontal, which means
that the norm is taken on $\R_{x_1}\times\R_{x_2}$.}
\begin{equation}\begin{split}\label{4.16}
\f{d}{dt}\|u\|^2_{L^2_\h}-\nu_3\bigl(\p^2_3u\,,\,u\bigr)_{L^2_{\h}}
&=-\big(u\cdot \nabla u\,,\,u\big)_{L^2_\h}
-\big(\nabla P\,,\,u\big)_{L^2_\h}\\
&\leq\|u\|_{L^{\infty}_\h}\|\nabla u\|_{L^2_\h}\|u\|_{L^2_\h}
-\big(\nabla P\,,\,u\big)_{L^2_\h}\\
&\leq\|u\|_{H^{s_1}_\h}^2\bigl(\|\pa_3 u\|_{L^2_\h}+\|u\|_{H^{s_1}_\h}\bigr)
-\big(\nabla P\,,\,u\big)_{L^2_\h},
\end{split}\end{equation}
where in the last step, we have used the fact that
$H^{s_1}_{\h}\hookrightarrow L^{\infty}_{\h}$
since $s_1>2$.

Next, by taking $\dot{H}^{s_1}_{\h}$ inner product of \eqref{eq4} with $u$,
we get
\begin{equation}\label{4.17}
\f{d}{dt}\|u\|^2_{\dot{H}^{s_1}_\h}
-\nu_3\bigl(\p^2_3\Lambda^{s_1}_{\h}u\,,\,\Lambda^{s_1}_{\h}u\bigr)_{L^2_{\h}}
=-\cD-\cE-\big(\nabla P\,,\,u\big)_{\dH^{s_1}_\h},
\end{equation}
where the operator $\Lam_\h\eqdef(-\D_\h)^{\f12}$, and
$$\cD=\bigl(\Lambda^{s_1-1}_{\h}\nablah(u^{\h}\cdot \nabla_{\h}u)\,,\,
\Lambda^{s_1-1}_{\h}\nablah u\bigr)_{L^2_\h},\quad
\cE=\big(\Lambda^{s_1-1}_{\h}\nablah(u^{3}\cdot \p_3u)\,,\,
\Lambda^{s_1-1}_{\h}\nablah u\big)_{L^2_\h}.$$

For the first term $\cD$, let us write it as
\begin{align*}
\cD&=\big(\Lambda^{s_1-1}_{\h}(\nabla_{\h}u^{\h}\cdot \nabla_{\h}u)\,,\,
\Lambda^{s_1-1}_{\h}\nabla_{\h}u\big)_{L^2_\h}
+\big(\Lambda^{s_1-1}_{\h}(u^{\h}\cdot\nablah\nablah u)\,,\,
\Lambda^{s_1-1}_{\h}\nabla_{\h}u\big)_{L^2_\h}\\
&=\big(\Lambda^{s_1-2}_{\h}\nablah(\nabla_{\h}u^{\h}\cdot \nabla_{\h}u)\,,\,
\Lambda^{s_1-2}_{\h}\nablah^2u\big)_{L^2_\h}
+\big([\Lambda^{s_1-1}_{\h},u^{\h}\cdot\nabla_{\h}] \nabla_{\h}u\,,\,
\Lambda^{s_1-1}_{\h}\nabla_{\h}u\big)_{L^2_\h}\\
&\quad+\big(u^{\h}\cdot \nabla_{\h}\Lambda^{s_1-1}_{\h}\nabla_{\h}u\,,\,
\Lambda^{s_1-1}_{\h}\nabla_{\h}u\big)_{L^2_\h}
\eqdef\cD_1+\cD_2+\cD_3.
\end{align*}

Notice that in our aim estimate \eqref{thm4.1}, the norm
$H^{s_1,0}$ is inhomogeneous in horizontal variables.
So without loss of generality, we assume $s_1\in(2,3)$ from now on for simplification.
Then we can use Lemma \ref{product law} to get
\begin{equation}\label{4.18}
     \begin{aligned}
     |\cD_1|&\leq \|\nabla_{\h}u\otimes \nablah^2u\|_{\dH^{s_1-2}_{\h}}
     \|u\|_{\dH^{s_1}_{\h}}\\
     &\lesssim \|\nablah u\|_{\dH^{1-\f{s_1-2}2}_{\h}}^{\f12}
     \|\nablah u\|_{\dH^{1+\f{s_1-2}2}_{\h}}^{\f12}
     \|\nabla^2_{\h}u\|_{\dH^{s_1-2}_{\h}}\|u\|_{\dH^{s_1}_{\h}}\\
     &\lesssim \|u\|_{H^{s_1}_{\h}}^3.
 \end{aligned}
\end{equation}
And since $s_1-1>1$, the commutator term $\cD_2$
can be handled by using Lemma \ref{communitor} as
\begin{equation}\begin{split}
|\cD_2|\leq \|[\Lambda^{s_1-1}_{\h},u^{\h}\cdot\nabla_{\h}]\nabla_{\h}u\|_{L^2_{\h}}
\|\Lam^{s_1-1}\nabla_{\h}u\|_{L^2_{\h}}
\lesssim\|\nablah u\|_{H^{s_1-1}_{\h}}^3\lesssim\|u\|_{H^{s_1}_{\h}}^3.
\end{split}\end{equation}
And thanks to $H^{s_1-1}_{\h}\hookrightarrow L^{\infty}_{\h}$
for $s_1>2$, we get by using integrating by parts that
\begin{equation}\begin{split}
|\cD_3|&=\f12\Bigl|\int_{\R_{\h}}
\diveh u^{\h}\cdot|\Lambda^{s_1-1}_{\h}\nabla_{\h}u|^2\,dx_{\h}\Bigr|\\
&\lesssim \|\diveh u^{\h}\|_{L^{\infty}_\h}
\|\Lambda^{s_1-1}_{\h}\nabla_{\h}u\|_{L^2_\h}^2\\
&\lesssim \|u\|_{H^{s_1}_{\h}}^3.
\end{split}\end{equation}

Similarly, we can write
\begin{align*}
\cE&=\big(\Lambda^{s_1-1}_{\h}(\nabla_{\h}u^3\pa_3u)\,,\,
\Lambda^{s_1-1}_{\h}\nabla_{\h}u\big)_{L^2_\h}
+\big(\Lambda^{s_1-1}_{\h}(u^3\pa_3\nablah u)\,,\,
\Lambda^{s_1-1}_{\h}\nabla_{\h}u\big)_{L^2_\h}\\
&=\big(\Lambda^{s_1-2}_{\h}\nablah(\nabla_{\h}u^3\pa_3u)\,,\,
\Lambda^{s_1-2}_{\h}\nablah^2u\big)_{L^2_\h}
+\big([\Lambda^{s_1-1}_{\h},u^3\nabla_{\h}] \pa_3u\,,\,
\Lambda^{s_1-1}_{\h}\nabla_{\h}u\big)_{L^2_\h}\\
&\quad+\big(u^3\pa_3\Lambda^{s_1-1}_{\h}\nabla_{\h}u\,,\,
\Lambda^{s_1-1}_{\h}\nabla_{\h}u\big)_{L^2_\h}
\eqdef\cE_1+\cE_2+\cE_3.
\end{align*}
Notice that in our choice $s_1\in (2,3)$, thus we can use Lemma \ref{product law}
to deduce
\begin{equation}\begin{split}
|\cE_1|&\lesssim\Bigl(\|\nabla^2_{\h}u^{3}\cdot \p_3u\|_{\dH^{s_1-2}_{\h}}
+\|\nabla_{\h}u^{3}\cdot \p_3\nabla_{\h}u\|_{\dH^{s_1-2}_{\h}}\Bigr)
\|u\|_{\dH^{s_1}_{\h}}\\
&\lesssim\Bigl(\|\nabla^2_{\h}u^3\|_{\dH^{s_1-2}_{\h}}
\|\pa_3u\|_{\dH^{1-\f{s_1-2}2}_{\h}}^{\f12}
\|\pa_3u\|_{\dH^{1+\f{s_1-2}2}_{\h}}^{\f12}\\
&\qquad+\|\pa_3\nablah u\|_{\dH^{s_1-2}_{\h}}
\|\nabla_{\h}u^3\|_{\dH^{1-\f{s_1-2}2}_{\h}}^{\f12}
\|\nabla_{\h}u^3\|_{\dH^{1+\f{s_1-2}2}_{\h}}^{\f12}\Bigr)
\|u\|_{\dH^{s_1}_{\h}}\\
&\lesssim \|u\|^2_{H^{s_1}_{\h}}\|\pa_3u\|_{H^{s_1-1}_{\h}}.
\end{split}\end{equation}
And since $s_1-1>1$, the commutator term $\cE_2$
can be handled by using Lemma \ref{communitor} as
\begin{equation}\begin{split}\label{4.22}
|\cE_2|&\leq \|[\Lambda^{s_1-1}_{\h},u^3\nabla_{\h}]\pa_3u\|_{L^2_{\h}}
\|\Lam^{s_1-1}\nabla_{\h}u\|_{L^2_{\h}}\\
&\lesssim\|\nablah u^3\|_{H^{s_1-1}_{\h}}\|\pa_3u\|_{H^{s_1-1}_{\h}}
\|\Lam^{s_1-1}\nabla_{\h}u\|_{L^2_{\h}}\\
&\lesssim\|u\|_{H^{s_1}_{\h}}^2\|\pa_3u\|_{H^{s_1-1}_{\h}}.
\end{split}\end{equation}

Now by substituting \eqref{4.18}-\eqref{4.22} into \eqref{4.17}, we achieve
\begin{align*}
\f{d}{dt}\|u\|^2_{\dot{H}^{s_1}_\h}
& -\nu_3\bigl(\p^2_3\Lambda^{s_1}_{\h} u\,,\,
\Lambda^{s_1}_{\h} u\bigr)_{L^2_{\h}}\\
&\lesssim \|u\|^3_{H^{s_1}_{\h}}
+\|u\|^2_{H^{s_1}_{\h}}\|\p_3u\|_{H^{s_1-1}_{\h}}
-\big(u^3\pa_3\Lambda^{s_1-1}_{\h}\nabla_{\h}u\,,\,
\Lambda^{s_1-1}_{\h}\nabla_{\h}u\big)_{L^2_\h}
-\big(\nabla P\,,\,u\big)_{\dH^{s_1}_\h},
\end{align*}
which together with \eqref{4.16} leads to
\begin{align*}
\f{d}{dt}\|u\|^2_{{H}^{s_1}_\h}
& -\nu_3\bigl(\p^2_3 u\,,\, u\bigr)_{L^2_{\h}}
-\nu_3\bigl(\p^2_3\Lambda^{s_1}_{\h} u\,,\,
\Lambda^{s_1}_{\h} u\bigr)_{L^2_{\h}}\\
&\lesssim\|u\|^3_{H^{s_1}_{\h}}
+\|u\|^2_{H^{s_1}_{\h}}\|\p_3u\|_{H^{s_1-1}_{\h}}
-\big(u^3\pa_3\Lambda^{s_1-1}_{\h}\nabla_{\h}u\,,\,
\Lambda^{s_1-1}_{\h}\nabla_{\h}u\big)_{L^2_\h}
-\big(\nabla P\,,\,u\big)_{H^{s_1}_\h}.
\end{align*}
Integrating in time and the space variable $x_3$ gives
\begin{equation}\begin{split}\label{4.23}
\|u&\|_{L^\infty_t(H^{s_1,0})}^2
+\nu_3\|\pa_3 u\|_{L^2_t(H^{s_1,0})}^2
\lesssim\|u_0\|_{H^{s_1,0}}^2
+\int_0^t\Bigl(\|u\|_{L^4_\v(H^{s_1}_{\h})}^2\|u\|_{H^{s_1,0}}\\
&+\|u\|^2_{L^4_\v(H^{s_1}_{\h})}\|\pa_3u\|_{H^{s_1-1,0}}
-\big(u^3\pa_3\Lambda^{s_1-1}_{\h}\nabla_{\h}u\,,\,
\Lambda^{s_1-1}_{\h}\nabla_{\h}u\big)_{L^2}
-\big(\nabla P\,,\,u\big)_{L^2_\v(H^{s_1}_\h)}\Bigr)\,dt',
\end{split}\end{equation}
where the norm $H^{s_1,0}=L^2_\v(H^{s_1}_\h)$ can be given as follows
$$\|a\|_{H^{s_1,0}}^2=\int_{\R^3}(1+|\xi_\h|)^{2s_1}|\wh a(\xi)|^2\,d\xi.$$
By using integration by parts and the fact that $\dive u=0$, we have
$$-\big(\nabla P\,,\,u\big)_{L^2_\v(H^{s_1}_\h)}
=\big(P\,,\,\dive u\big)_{L^2_\v(H^{s_1}_\h)}=0,$$
and
\begin{align*}
-\big(u^3\pa_3\Lambda^{s_1-1}_{\h}\nabla_{\h}u\,,\,
\Lambda^{s_1-1}_{\h}\nabla_{\h}u\big)_{L^2}
&=\f12\int_{\R^3}\pa_3 u^3|\Lambda^{s_1-1}_{\h}\nabla_{\h}u|^2\,dx\\
&\lesssim\|\pa_3 u^3\|_{L^2_\v(L^\infty_\h)}
\|\Lambda^{s_1-1}_{\h}\nabla_{\h}u\|_{L^4_\v(L^2_\h)}^2\\
&\lesssim\|\diveh \uh\|_{L^2_\v(H^{s_1-1}_{\h})}
\|u\|^2_{L^4_\v(H^{s_1}_{\h})}
\end{align*}
By substituting the above two estimates into \eqref{4.23}, we achieve
\begin{equation}\begin{split}\label{4.24}
\|& u\|_{L^\infty_t(H^{s_1,0})}^2
+\nu_3\|\pa_3 u\|_{L^2_t(H^{s_1,0})}^2\\
&\lesssim\| u_0\|_{H^{s_1,0}}^2
+\int_0^t\Bigl(\| u\|_{L^4_\v(H^{s_1}_{\h})}^2\| u\|_{H^{s_1,0}}
+\|u\|^2_{L^4_\v(H^{s_1}_{\h})}\|\pa_3u\|_{H^{s_1-1,0}}\Bigr)\,dt'\\
&\lesssim\|u_0\|_{H^{s_1,0}}^2
+t^{\f34}\|\pa_3 u\|_{L^2_t(H^{s_1,0})}^{\f12}
\|u\|_{L^\infty_t(H^{s_1,0})}^{\f52}
+t^{\f14}\|\pa_3 u\|_{L^2_t(H^{s_1,0})}^{\f32}
\|u\|_{L^\infty_t(H^{s_1,0})}^{\f32}.
\end{split}\end{equation}

In the following, we shall use the continuity argument once again.
Let us denote
\begin{align*}
 T^\ast\eqdef \sup\Bigl\{T>0:~ &u\text{ exists on $[0,T)$, and for any $t<T$, there holds}\\
 &\|u\|_{L^\infty_t(H^{s_1,0})}^2
+\nu_3\|\pa_3 u\|_{L^2_t(H^{s_1,0})}^2
 <2\|u_0\|_{H^{s_1,0}}^2\Bigr\}.
\end{align*}
Similar to the local well-posedness theory of the Euler equations,
we can prove this $T^\ast$ is well-defined and positive.
Moreover, this solution $u$ will not form singularities on
$[0,T^\ast]$.

Then for any $t<T^\ast$, we can deduce from \eqref{4.24} that
$$\|u\|_{L^\infty_t(H^{s_1,0})}^2
+\nu_3\|\pa_3 u\|_{L^2_t(H^{s_1,0})}^2
\leq \|u_0\|^2_{H^{s_1,0}_{\h}}
+C\bigl(\nu^{-\f14}_3t^{\f34}+\nu^{-\f34}_3t^{\f14}\bigr)
\|u_0\|_{H^{s_1,0}}^{3}.$$
Thus in order not to contradict to
the definition of $T^\ast$, there must hold
$$C\bigl(\nu^{-\f14}_3t^{\f34}+\nu^{-\f34}_3t^{\f14}\bigr)
\|u_0\|_{H^{s_1,0}}\geq1,$$
which implies
$$T^\ast
\geq C\min \Big\{\f{\nu_3^{\f13}}{\|u_0\|^{\f43}_{H^{s_1,0}}}\,,\,
\f{\nu_3^3}{\|u_0\|_{H^{s_1,0}}^4}\Big\}.$$
Moreover, for any time $t\leq T^\ast$, there holds
\begin{equation}\label{4.25}
\|u\|_{L^\infty_t(H^{s_1,0})}^2
+\nu_3\|\pa_3 u\|_{L^2_t(H^{s_1,0})}^2
<2\|u_0\|_{H^{s_1,0}}^2.
\end{equation}

The only remaining thing to prove is the uniqueness.
For any two solutions $u_1,\,u_2$ of \eqref{eq4} with same initial data,
their difference $\du= u_1-u_2$ satisfies
\begin{equation*}
     \left\{
     \begin{array}{l}
     \pa_t\du-\nu_3\pa_3^2\du
     +u\cdot\nabla\du+\du\cdot\nabla u_2+\nabla(P_1-P_2)=0,\\
     \dive \du= 0,\\ \du|_{t=0} =0.
     \end{array}
     \right.
\end{equation*}
The $L^2$ energy estimate gives
\begin{equation}\begin{split}\label{4.26}
\f12\f{d}{dt}\|\du\|_{L^2}^2+\nu_3\|\pa_3\du\|_{L^2}^2
&\leq-\int_{\R^3}(\du\cdot\nabla u_2)\du\,dx\\
&=-\int_{\R^3}(\du^\h\cdot\nablah u_2)\du\,dx
+\int_{\R^3}u_2\cdot(\du^3\pa_3\du+\du\pa_3\du^3)\,dx\\
&\leq\|\du\|_{L^2}^2\|\nablah u_2\|_{L^\infty}
+\f12\nu_3\|\pa_3\du\|_{L^2}^2
+\nu_3^{-1}\|\du\|_{L^2}^2\|u_2\|_{L^\infty}^2
\end{split}\end{equation}
On the other hand, in view of \eqref{4.25},
for any $t\leq T^\ast$, there holds
\begin{equation}\begin{split}\label{4.27}
\int_0^t\bigl(\|\nablah u_2(t')\|_{L^\infty}
+\nu_3^{-1}\|u_2(t')\|_{L^\infty}^2\bigr)\,dt'
\lesssim& t^{\f34}\|u_2\|_{L^\infty_t(H^{s_1,0})}^{\f12}
\|\pa_3 u\|_{L^2_t(H^{s_1,0})}^{\f12}\\
&+\nu_3^{-1}t^{\f12}\|u\|_{L^\infty_t(H^{s_1,0})}
\|\pa_3 u\|_{L^2_t(H^{s_1,0})}<\infty.
\end{split}\end{equation}
Thanks to \eqref{4.27} and the fact that $\du|_{t=0}=0$,
we can deduce by applying Gronwall's inequality to \eqref{4.26}
that $\du$ indeed vanishes on $[0,T^\ast]$.
This completes the proof of Theorem \ref{thm3}.

\setcounter{equation}{0}
\section{GWP with viscous coefficients large in one direction}\label{Sect5}

In this section, we shall prove that both \eqref{1.1} and \eqref{ANS}
would admit a global strong solution
provided one of their viscous coefficients is sufficiently large.

\subsection{The proof of Corollary \ref{col1.1}}
Since initially $u_0\in L^2$, the energy inequality gives
\begin{equation}\label{5.1}
\|u\|_{L^\infty_t(L^2)}^2+2\sum_{i=1}^3\nu_i\|\pa_i u\|_{L^2_t(L^2)}^2
\leq\|u_0\|_{L^2}^2,\quad\forall\ t>0.
\end{equation}
By Theorem \ref{thm1}, we know the corresponding strong solution
at least exists to
$$T^{\star}\geq C\nu^{\frac{p}{p-3}}_2(\nu_1\nu_2\nu_3)^{\frac{1}{p-3}}
\|u_0\|_{L^p}^{-\frac{2p}{p-3}},$$
which together the condition \eqref{condinu11} on $\nu_1$ ensures
$$T^\star\gg\nu_3^{-5}\|u_0\|_{L^2}^4.$$
Then by applying Chebyshev's inequality on \eqref{5.1},
and in view of the assumption that $0<\nu_3\leq\nu_2\leq\nu_1$,
we know there must exist some $t_0\in(0,T^\star)$ such that
$$2\nu_3\|\nabla u(t_0)\|_{L^2}^2
\leq2\sum_{i=1}^3\nu_i\|\pa_i u(t_0)\|_{L^2}^2
\leq\f{1}{T^\star}\|u_0\|_{L^2}^2,$$
and thus
$$\|u(t_0)\|_{L^2}\|\nabla u(t_0)\|_{L^2}
\leq\|u_0\|_{L^2}\cdot\f{1}{\sqrt{2\nu_3T^\star}}\|u_0\|_{L^2}
\ll\nu_3^2.$$
This in particular means that from time $t_0<T^\star$, this strong solution
$u$ has already become to a small solution. Then due to the classical
well-posedness result for N-S with small initial data
(see for instance \cite{Leray}), this strong solution $u$
indeed exists globally in time.

\subsection{The proof of Corollary \ref{col1.2}}
Exactly along the same line to the proof of \eqref{4.2},
but by using Lemma \ref{lem2.5} with $\al=0$ instead of
$\al\in(0,\f12)$ there, we can achieve
\begin{equation}\begin{split}\label{5.2}
\|u\|_{\LB}&+\nu_1^{\f12} \|\p_1 u\|_{\AB}+\nu_2^{\f12}\|\p_2 u\|_{\AB}\\
&\leq \|u_0\|_{\BB}+C\|u\|^{\f12}_{\LB}
\|\p_1u\|^{\f14}_{\AB}\|\p_2u\|^{\f14}_{\AB}
\|\nabla_{\h}u\|^{\f12}_{\AB}.
\end{split}\end{equation}
Then similar to the proof of Theorem \ref{thm3} in Subsection \ref{sub4.1},
let us denote
\begin{align*}
 T^{\ast}\eqdef \sup\Bigl\{&T>0:u\text{ exists on $[0,T)$, and for any $t<T$, there holds}\\
 &\|u\|_{\LTB}+\nu_1^{\f12} \|\p_1 u\|_{\ATB}+\nu_2^{\f12} \| \p_2 u\|_{\ATB}\leq 2\|u_0\|_{\BB}\Bigr\}.
\end{align*}
The local well-posedness result in \cite{Pa02} guarantees that
this $T^\ast$ is well-defined and positive.

We shall prove by contradiction. If $T^\ast<\infty$,
then for all $t\leq T^{\ast}$, we can deduce from \eqref{5.2} that
$$\|u\|_{\LB}+\nu_1^{\f12}\|\p_1 u\|_{\AB}+\nu_2^{\f12} \|\p_2 u\|_{\AB}
\leq \| u_0\|_{\BB}\Bigl(1+C\nu_1^{-\f18}\nu_2^{-\f38}
\|u_0\|^{\f12}_{\BB}\Bigr).$$
As a result, as long as $\nu_1$ is sufficiently large such that
$$C\nu_1^{-\f18}\nu_2^{-\f38}
\|u_0\|^{\f12}_{\BB}<\f12,$$
we can deduce
$$\|u\|_{\LB}+\nu_1^{\f12}\|\p_1 u\|_{\AB}+\nu_2^{\f12} \|\p_2 u\|_{\AB}
\leq \f32\|u_0\|_{\BB},\quad\forall\ t\leq T^{\ast}.$$
This contradicts to the choice of $T^\ast$.
Thus there must be $T^\ast=\infty$, and the following estimate
holds uniformly in time:
$$\|u\|_{\LB}+\nu_1^{\f12}\|\p_1 u\|_{\AB}+\nu_2^{\f12} \|\p_2 u\|_{\AB}
\leq 2\| u_0\|_{\BB},\quad\forall\ t>0.$$
This completes the proof of this corollary.

\medskip

\noindent {\bf Acknowledgments.}
The authors would like to thank Prof. Ping Zhang for his careful reading
and some valuable comments.
Liu is supported by NSF of China under grant 12101053,
and the Fundamental Research Funds
for the Central Universities under Grant 310421118.
\medskip

\end{document}